\newtheorem{theorem}{Theorem}[section]
\newtheorem{lemma}{Lemma}[section]
\newdefinition{definition}{Definition}[section]
\newdefinition{example}{Example}[section]
\numberwithin{equation}{section}
\begin{document}
\begin{frontmatter}

\title{Numerical algorithm for two-dimensional time-fractional wave equation of distributed-order with a nonlinear source term}
\author[nwpu,hut]{Jiahui Hu}
\ead{hujh@mail.nwpu.edu.cn}
\author[nwpu]{Jungang Wang}
\author[nwpu]{Zhanbin Yuan}
\author[nwpu]{Zongze Yang}
\author[nwpu]{Yufeng Nie\corref{cor1}}
\ead{yfnie@nwpu.edu.cn}

\cortext[cor1]{Corresponding author}
\address[nwpu]{Research Center for Computational Science, Northwestern Polytechnical University, Xi'an 710129, China}
\address[hut]{College of Science, Henan University of Technology, Zhengzhou 450001, China}
\begin{abstract}

  In this paper, an alternating direction implicit (ADI) difference scheme for two-dimensional
  time-fractional wave equation of distributed-order with a nonlinear source term is presented.
  The unique solvability of the difference solution is discussed, and
  the unconditional stability and convergence order of the numerical scheme are analysed.
  Finally, numerical experiments are carried out to verify the effectiveness and accuracy of the algorithm.

\end{abstract}
\begin{keyword}
  Two-dimensional time-fractional wave equation of distributed-order \sep
  ADI scheme \sep
  Nonlinear source term \sep
  Stability \sep
  Convergence
  \MSC[2010] 35R11\sep 65M06\sep 65M12
\end{keyword}
\end{frontmatter}


\section[Introduction]{Introduction}\label{sec:introduciton}
The idea of distributed-order differential equation was first introduced by Caputo in his work for modeling
the stress-strain behavior of an anelastic medium in 1960s \cite{Caputo}.
Being different from the differential equations with the single-order fractional derivative and
the ones with sums of fractional derivatives, i.e., multi-term fractional differential equations (FDEs), the
distributed-order differential equations are derived by integrating the order of differentiation over a certain range.
It can be regarded as a generalization of the aforementioned two classes of FDEs.
A typical application of this kind of FDEs is in the retarding sub-diffusion process, where
a plume of particles spreads at a logarithmic rate, which leads to ultraslow diffusion
(see \cite{Sinai1983}\cite{ChechkinKlafterSokolov2003}\cite{Kochubei2008}).
Another example is
the fractional Langevin equation of distributed-order, which was proposed to model the kinetics of retarding sub-diffusion
whose scaling exponent decreases in time,
and then was applied to simulate
the strongly anomalous ultraslow diffusion with the mean square displacement growing as a power of logarithm of time \cite{EabLim2011}.
The distributed-order FDEs were also found playing important role in other various research fields, such as
control and signal processing \cite{JiaoChenPodlubny2012}, modelling dielectric induction and diffusion \cite{Caputo2001}, identification of
systems \cite{Hartley1999}, and so on.

Till now, there have been many important progresses for the research on analytical solutions of distributed-order FDEs.
For the kinetic description of anomalous
diffusion and relaxation phenomena,
A. V. Chechkin et al. presented the diffusion-like equation with time fractional derivative of distributed-order
in \cite{Chechkin2003}, where the positivity of the solutions of the proposed equation was proved and the relation to
the continuous-time random walk theory was established.
T. M. Atanackovic et al. analysed a Cauchy
problem for a time distributed-order diffusion-wave equation by means of the theory of an abstract
Volterra equation \cite{atanackovic2009time}.
In \cite{Gorenflo2013}, for the one-dimensional distributed-order diffusion-wave equation,
R. Gorenflo et al. gave the interpretation of the fundamental solution of the Cauchy problem as a probability density function of the space variable $x$ evolving in time $t$ in the transform domain
by employing the technique of the Fourier and Laplace transforms.
Using the Laplace transform method, Z. Li et al investigated the asymptotic behavior
of solutions to the initial-boundary-value problem for the distributed-order time-fractional diffusion
equations \cite{LiLuchkoYamamoto2014}.

In most instances, the analytical solutions of distributed-order differential equations are not easy
to available, thus it stimulates researchers to develop numerical algorithms for approximate solutions.
To our knowledge, the research on numerically solving the distributed-order differential equations
are still in its infancy.
The literatures \cite{DiethelmFord2009}\cite{Podlubny2013}\cite{Katsikadelis2014}
concerned on developing numerical methods for solving distributed-order ordinary differential equations.
In terms of the distributed-order partial differential equations, most of the work are about the one-dimensional
time distributed-order differential equations,
and the integrating range of the order of time derivative is the interval $[0,1]$,
which is named as time distributed-order diffusion equation.
N. J. Ford et al. developed an implicit finite difference method for the solution of the diffusion
equation with distributed order in time \cite{Ford2015}. By using the Gr{\"{u}}nwald-Letnikov formula,
Gao et al. proposed two difference schemes to solve the one-dimensional distributed-order differential equations,
and the extrapolation method was applied to improve the approximate accuracy \cite{GaoSun2016111}.
In \cite{GaoSunSun2015}, the authors handled the same distributed-order differential equations by employing
a weighted and shifted Gr{\"{u}}nwald-Letnikov formula to derive several second-order convergent difference
schemes.
When the order of the time derivative is distributed over the interval $[1,2]$, it is called the time
distributed-order wave equation. The study of the numerical solution of this kind of equation
is rather more limited.
Ye et al. derived and analysed a compact
difference scheme for a distributed-order time-fractional wave equation in \cite{YeLiuAnh2015}.

When considering the high-dimensional models, Gao et al. investigated ADI schemes for two-dimensional
distributed-order diffusion equations \cite{GaoSun2015}\cite{GaoSun2016}, and they also developed two
ADI difference schemes for solving the two-dimensional time distributed-order wave equations \cite{GaoSun2016a}.
Due to the widespread use of the nonlinear models \cite{RidaEl-SayedArafa2010}\cite{WazwazGorguis2004},
M. L. Morgado et al. developed an implicit difference scheme for one-dimensional time distributed-order
diffusion equation with a nonlinear source term \cite{MorgadoRebelo2015}.
For further discussion on the numerical approaches for solving the high-dimensional distributed-order
partial differential equations,
this paper is devoted to develop effective numerical algorithm for two-dimensional time-fractional wave equation of distributed-order with a
nonlinear source term
\begin{align}\label{eq:equation}
&\int_1^2p(\beta){}_0^CD_t^\beta u(x,y,t)d\beta=\frac{\partial^2u(x,y,t)}{\partial x^2}+\frac{\partial^2u(x,y,t)}{\partial x^2}
+f\big(x,y,t,u(x,y,t)\big), \nonumber \\
&(x,y)\in \Omega,\quad t\in(0,T],   \\  \label{eq:boundaryy}
&u(x,y,t)=\phi(x,y,t),\quad (x,y)\in\partial\Omega,\quad 0\leq t<T,\\  \label{eq:initiall}
&u(x,y,0)=\psi_1(x,y),\quad u_t(x,y,0)=\psi_2(x,y),\quad (x,y)\in \Omega,
\end{align}
where $\Omega=(0,L_1)\times(0,L_2)$, and $\partial \Omega$ is the boundary of $\Omega$.
The fractional derivative ${}_0^CD_t^\beta v(t)$ in~\eqref{eq:equation} is given in the Caputo sense
\begin{equation*}
{}_0^CD_t^\beta v(t)=
\left\{
\begin{aligned}
&\frac{\partial v(t)}{\partial t}-\frac{\partial v(0)}{\partial t},\ \beta=1,\\
&\frac{1}{\Gamma{(2-\beta)}}\int_0^t(t-\xi)^{1-\beta}\frac{\partial^2 v(\xi)}{\partial \xi^2}d\xi,\ 1<\beta<2,\\
&\frac{\partial^2v(t)}{\partial t^2},\ \beta=2,
\end{aligned}\right.
\end{equation*}
and the function $p(\beta)$ is served as weight for the order of differentiation
such that $p(\beta)>0$ and $\int_1^2p(\beta)d\beta=c_0>0$.
We assume that $p(\beta)$, $\phi(x,y,t)$, $\psi_1(x,y)$, $\psi_2(x,y)$ and $f(x,y,t,u)$ are continuous,
and the nonlinear source term $f$ satisfies a Lipschitz condition of the form
\begin{equation}\label{eq:lip}
|f(x,y,t,u_1)-f(x,y,t,u_2)|\leq L_f|u_1-u_2|,
\end{equation}
where $L_{f}$ is a positive constant.

The main procedure of developing numerical scheme for solving problem \eqref{eq:equation}$-$\eqref{eq:initiall} is as
follows. Firstly a suitable numerical quadrature formula is adopted to discrete the integral
in \eqref{eq:equation}, and a multi-term time fractional wave equation is left whereafter.
Then we develop an ADI finite difference scheme which is uniquely solvable for the multi-term time
fractional wave equation. By using the discrete energy method, we prove the derived numerical scheme is unconditionally stable and convergent.

The rest of this paper is organized in the following way. In Section 2, the
ADI finite difference scheme is constructed and described detailedly.
In Section 3, we give analysis on solvability, stability and
convergence for the derived difference scheme. Numerical results are illustrated in Section 4 to confirm the
effectiveness and accuracy of our method, and some conclusions are drawn in the last section.


\section[The derivation of the ADI scheme]{The derivation of the ADI scheme}\label{sec:scheme}
This section focuses on deriving the ADI scheme for the
problems~\eqref{eq:equation}$-$\eqref{eq:initiall}.

Let ${M_1}$, ${M_2}$ and $N$ be positive integers,
and $h_1=L_1/M_1$, $h_2=L_2/M_2$ and $\tau=T/N$ be the uniform sizes of spatial grid and time step, respectively.
Then a spatial and temporal partition can be defined
as $x_i=ih_1$ for $i=0,1,\cdots,M_1$, $y_j=jh_2$ for $j=0,1,\cdots,M_2$ and $t_n=n\tau$ for $n=0,1,\cdots,N$.
Denote $\bar{\Omega}_h=\{(x_i,y_j)\mid 0\leq i\leq M_1,0\leq j\leq M_2\}$
and $\Omega_\tau=\{t_n\mid t_n=n\tau,0\leq n\leq N\}$,
then the domain $\bar{\Omega}\times [0,T]$ is covered by $\bar{\Omega}_h\times\Omega_\tau$.
Let $u=\{u_{ij}^n\mid 0\leq i\leq M_1,0\leq j\leq M_2,0\leq n\leq N \}$ be a grid function on $\bar{\Omega}_h\times\Omega_\tau$.
We introduce the following notations:
$$u_{ij}^{n-\frac{1}{2}}=\frac{1}{2}(u_{ij}^n+u_{ij}^{n-1}),\qquad
\delta_tu_{ij}^{n-\frac{1}{2}}=\frac{1}{\tau}(u_{ij}^n-u_{ij}^{n-1}),$$
$$\delta_xu_{i-\frac{1}{2},j}^n=\frac{1}{h_1}(u_{ij}^n-u_{i-1,j}^n),\qquad
\delta_x^2u_{ij}^n=\frac{1}{h_1}(\delta_xu_{i+\frac{1}{2},j}^n-\delta_xu_{i-\frac{1}{2},j}^n),$$
$$\delta_yu_{i,j-\frac{1}{2}}^n=\frac{1}{h_2}(u_{ij}^n-u_{i,j-1}^n),\qquad
\delta_y^2u_{ij}^n=\frac{1}{h_2}(\delta_xu_{i,j+\frac{1}{2}}^n-\delta_xu_{i,j-\frac{1}{2}}^n),$$
and
$$\Delta_hu_{ij}=\delta_x^2u_{ij}+\delta_y^2u_{ij}.$$

Consider Eq.~\eqref{eq:equation} at the point $(x_i,y_j,t_n)$, and we write it as
\begin{equation}\label{eq:point}
\begin{aligned}
&\int_1^2p(\beta){}_0^CD_t^{\beta}u(x_i,y_j,t_n)d\beta\\
=&\frac{\partial^2u(x_i,y_j,t_n)}{\partial x^2}
+\frac{\partial^2u(x_i,y_j,t_n)}{\partial y^2}+f\big(x_i,y_j,t_n,u(x_i,y_j,t_n)\big).
\end{aligned}
\end{equation}
Take an average of Eq. \eqref{eq:point} on time level $t=t_n$ and $t=t_{n-1}$, then we have
\begin{equation}\label{eq:average}
\begin{aligned}
&\frac{1}{2}\bigg(\int_1^2p(\beta){}_0^CD_t^\beta u(x_i,y_j,t_n)d\beta
+\int_1^2p(\beta){}_0^CD_t^\beta u(x_i,y_j,t_{n-1})d\beta\bigg)\\
=& \frac{1}{2}\bigg[\frac{\partial^2u(x_i,y_j,t_n)}{\partial x^2}
+\frac{\partial^2u(x_i,y_j,t_{n-1})}{\partial x^2}\bigg]
+\frac{1}{2}\bigg[\frac{\partial^2u(x_i,y_j,t_n)}{\partial y^2}
+\frac{\partial^2u(x_i,y_j,t_{n-1})}{\partial y^2}\bigg]\\
&+\frac{1}{2}\bigg[f\big(x_i,y_j,t_n,u(x_i,y_j,t_n)\big)+f\big(x_i,y_j,t_{n-1},u(x_i,y_j,t_{n-1})\big)\bigg].
\end{aligned}
\end{equation}
Denote by $U_{ij}^n=u(x_i,y_j,t_n)$
the grid functions on $\bar{\Omega}_h\times\Omega_\tau$ with $0\leq i\leq M_1$,
$0\leq j\leq M_2$, $0\leq n\leq N$. Eq. \eqref{eq:average} can be expressed as
\begin{equation}\label{eq:gridU}
\begin{aligned}
\int_1^2p(\beta){}_0^CD_t^\beta U_{ij}^{n-\frac{1}{2}}d\beta= & \frac{\partial^2}
{\partial x^2}U_{ij}^{n-\frac{1}{2}}+\frac{\partial^2}{\partial y^2}U_{ij}^{n-\frac{1}{2}}\\
&+\frac{1}{2}\bigg[f\big(x_i,y_j,t_n,U_{ij}^n\big)+f\big(x_i,y_j,t_{n-1},U_{ij}^{n-1}\big)\bigg]
\end{aligned}
\end{equation}

Firstly we discretize the integral term in~\eqref{eq:gridU}.
Suppose $p(\beta)\in C^2[1,2]$, ${}_0^CD_t^\beta u(x_i,y_j,t)|_{t=t_{n-1}}$ and
${}_0^CD_t^\beta u(x_i,y_j,t)|_{t=t_n}\in C^2[1,2]$.
Let $K$ be a positive integer, and
$\Delta\beta=1/K$ be the uniform step size. Take $\beta_l=1+\frac{2l-1}{2}\Delta\beta$, $1\leq l \leq K$,
then the mid-point quadrature rule is used for approximating the integral in~\eqref{eq:gridU}
\begin{equation}\label{eq:multii}
\begin{aligned}
&\Delta\beta\sum_{l=1}^{K}p(\beta_l){}_0^{C}D_t^{\beta_l}U_{ij}^{n-\frac{1}{2}}+R_1
=\frac{\partial^2}{\partial x^2}U_{ij}^{n-\frac{1}{2}}+\frac{\partial^2}{\partial y^2}U_{ij}^{n-\frac{1}{2}}\\
&+\frac{1}{2}\bigg[f\big(x_i,y_j,t_n,U(x_i,y_j,t_n)\big)+f\big(x_i,y_j,t_{n-1},U(x_i,y_j,t_{n-1})\big)\bigg],
\end{aligned}
\end{equation}
where $R_1=\mathcal{O}(\Delta\beta^2)$.

Next, we solve the multi-term time fractional wave equation~\eqref{eq:multii} with the initial and boundary
conditions~\eqref{eq:initiall} and~\eqref{eq:boundaryy}.
Suppose $u(x,y,t)\in C_{x,y,t}^{4,4,3}(\bar{\Omega}\times [0,T])$.
According to Theorem 8.2.5 in \cite{Sun2009},
the Caputo derivative ${}_0^CD_t^{\beta_l}U_{ij}^{n-\frac{1}{2}}$, $1 < \beta_l< 2$
have the fully discrete difference scheme
\begin{equation}\label{eq:fractional}
\begin{aligned}
&{}_0^CD_t^{\beta_l}U_{ij}^{n-\frac{1}{2}}\\
=&\frac{\tau^{1-\beta_l}}{\Gamma(3-\beta_l)}\bigg[a_0^{(\beta_l)}\delta_tU_{ij}^{n-\frac{1}{2}}
-\sum_{k=1}^{n-1}\big(a_{n-k-1}^{(\beta_l)}-
a_{n-k}^{(\beta_l)}\big)\delta_tU_{ij}^{k-\frac{1}{2}}-a_{n-1}^{(\beta_l)}\psi_2(x_i,y_j)\bigg]+R_2^l,
\end{aligned}
\end{equation}
where
\begin{equation*}
a_k^{(\beta_l)}=(k+1)^{2-\beta_l}-k^{2-\beta_l},\quad k=0,1,2,\cdots,
\end{equation*}
and
\begin{equation}\label{eq:lefterm}
\begin{aligned}
\mid R_2^l\mid\leq & \frac{1}{\Gamma(3-\beta_l)}\bigg[\frac{2-\beta_l}{12}+\frac{2^{3-\beta_l}}{3-\beta_l}
-(1+2^{1-\beta_l})+\frac{1}{12}\bigg]\cdot\\
& \max_{0\leq t\leq t_n}\mid\frac{\partial^3u(x_i,y_j,t)}{\partial t^3}\mid\tau^{3-\beta_l},\quad
l=1,2,\cdots,K.
\end{aligned}
\end{equation}
In the meantime, using the second order finite difference
\begin{equation*}
\frac{\partial^2g(x_i)}{\partial x^2}=\frac{g(x_{i+1})-2g(x_i)+g(x_{i-1})}{(\Delta x)^2}
-\frac{(\Delta x)^2}{12}\frac{\partial^4g(\xi_i)}{\partial x^4},\quad \xi_i \in (x_{i-1},x_{i+1})
\end{equation*}
to approximate the second order derivatives in~\eqref{eq:multii},
it is obtained
\begin{equation}\label{eq:leftnon}
\begin{aligned}
&\Delta\beta\sum_{l=1}^Kp(\beta_l)\frac{\tau^{1-\beta_l}}{\Gamma(3-\beta_l)}
\bigg[a_0^{(\beta_l)}\delta_tU_{ij}^{n-\frac{1}{2}}-\sum_{k=1}^{n-1}\big(a_{n-k-1}^{(\beta_l)}-
a_{n-k}^{(\beta_l)}\big)\delta_tU_{ij}^{k-\frac{1}{2}}\\
&-a_{n-1}^{(\beta_l)}\psi_2(x_i,y_j)\bigg]+\sum_{l=1}^K\Delta\beta p(\beta_l)R_2^l+R_1\\
=& \delta_x^2U_{ij}^{n-\frac{1}{2}}+\delta_y^2U_{ij}^{n-\frac{1}{2}}
+\frac{1}{2}\Big(f\big(x_i,y_j,t_{n-1},U_{ij}^{n-1}\big)+f\big(x_i,y_j,t_{n},U_{ij}^{n}\big)\Big)+R_3,
\end{aligned}
\end{equation}
where $R_3=\mathcal{O}(h_1^2+h_2^2)$.
Subsequently, the nonlinear source term is dealt with in the following manner to
avoid a system of nonlinear equations when computing:
\begin{equation}\label{eq:nondis}
f(x_i,y_j,t_n,U_{ij}^n)=f(x_i,y_j,t_{n-1},U_{ij}^{n-1})+\mathcal{O}(\tau).
\end{equation}
Substituting~\eqref{eq:nondis} in~\eqref{eq:leftnon}, we are left with
\begin{equation}\label{eq:midsch}
\begin{aligned}
&\Delta\beta\sum_{l=1}^Kp(\beta_l)\frac{\tau^{1-\beta_l}}{\Gamma(3-\beta_l)}
\bigg[a_0^{(\beta_l)}\delta_tU_{ij}^{n-\frac{1}{2}}-\sum_{k=1}^{n-1}\big(a_{n-k-1}^{(\beta_l)}-
a_{n-k}^{(\beta_l)}\big)\delta_tU_{ij}^{k-\frac{1}{2}}-a_{n-1}^{(\beta_l)}\psi_2(x_i,y_j)\bigg]\\
&=\delta_x^2U_{ij}^{n-\frac{1}{2}}+\delta_y^2U_{ij}^{n-\frac{1}{2}}
+f\big(x_i,y_j,t_{n-1},U_{ij}^{n-1}\big)+R_{ij}^{n-\frac{1}{2}}+\widetilde{R}_{ij}^{n-\frac{1}{2}},
\end{aligned}
\end{equation}
where
$$R_{ij}^{n-\frac{1}{2}}=-\sum_{l=1}^K\Delta\beta p(\beta_l)R_2^l+\mathcal{O}(h_1^2+h_2^2)
+\mathcal{O}(\Delta\beta^2)$$
and
$$\widetilde{R}_{ij}^{n-\frac{1}{2}}=\mathcal{O}(\tau).$$
From~\eqref{eq:lefterm}, we can deduce that there exists a positive constant $C_1$ such that
$$\bigg|-\sum_{l=1}^K\Delta\beta p(\beta_l)R_2^l \bigg| \leq C_1\tau^{1+\frac{1}{2}\Delta\beta}
\sum_{l=1}^K\Delta\beta p(\beta_l).$$
Since
$$\sum_{l=1}^K\Delta\beta p(\beta_l)\sim \int_1^2p(\beta)d\beta=c_0,$$
we get
$$\sum_{l=1}^K\Delta\beta p(\beta_l)\leq C_2,$$
where $C_2$ is a positive constant.
Thus there exists a positive constant $C_3$ such that
$$\Big|R_{ij}^{n-\frac{1}{2}}\Big|\leq C_3\left(\tau^{1+\frac{1}{2}\Delta\beta}
+h_1^2+h_2^2+\Delta\beta^2\right).$$
Besides, it is obvious that $$\Big|\widetilde{R}_{ij}^{n-\frac{1}{2}}\Big| \leq C_4\tau,$$
where $C_4$ is a positive constant.

Denote $$\mu=\Delta\beta\sum_{l=1}^Kp(\beta_l)\frac{1}{\tau^{\beta_l}\Gamma(3-\beta_l)}.$$
Since
\begin{equation}\nonumber
\begin{aligned}
&\Delta\beta\sum_{l=1}^Kp(\beta_l)\frac{1}{\tau^{\beta_l}\Gamma(3-\beta_l)}\\
\sim & \int_1^2p(\beta)\frac{1}{\tau^\beta\Gamma(3-\beta)}d\beta\\
=&\frac{p(\beta^\ast)}{\Gamma(3-\beta^\ast)}\int_1^2\frac{1}{\tau^\beta}d\beta\\
=&\frac{p(\beta^\ast)}{\Gamma(3-\beta^\ast)}\frac{1-\tau}{\tau^2\mid \ln\tau \mid},
\end{aligned}
\end{equation}
it can be concluded that
$$\mu=\frac{1}{\mathcal{O}(\tau^2| \ln\tau|)}.$$
In addition, $|\ln\tau| \leq C\tau^{-\varepsilon}$ for any positive and small $\varepsilon$
when $\tau$ is sufficiently small, thus the term $\mathcal{O}(\tau^2|\ln\tau |)$
is almost the same as $\mathcal{O}(\tau^2)$ when $\tau$ is sufficiently small.
Adding the high order term
$$\frac{\tau}{4\mu}\delta_x^2\delta_y^2\frac{U_{ij}^n-U_{ij}^{n-1}}{\tau}$$
on both sides of~\eqref{eq:midsch}, we derive
\begin{equation}\label{eq:U}
\begin{aligned}
&\Delta\beta\sum_{l=1}^Kp(\beta_l)\frac{\tau^{1-\beta_l}}{\Gamma(3-\beta_l)}
\bigg[a_0^{(\beta_l)}\delta_tU_{ij}^{n-\frac{1}{2}}-\sum_{k=1}^{n-1}\big(a_{n-k-1}^{(\beta_l)}-
a_{n-k}^{(\beta_l)}\big)\delta_tU_{ij}^{k-\frac{1}{2}}-a_{n-1}^{(\beta_l)}\psi_2(x_i,y_j)\bigg]\\
&+\frac{\tau}{4\mu}\delta_x^2\delta_y^2\frac{U_{ij}^n-U_{ij}^{n-1}}{\tau}\\
=&\delta_x^2U_{ij}^{n-\frac{1}{2}}+\delta_y^2U_{ij}^{n-\frac{1}{2}}
+f\big(x_i,y_j,t_{n-1},U_{ij}^{n-1}\big)+R_{ij}^{n-\frac{1}{2}}+\widetilde{R}_{ij}^{n-\frac{1}{2}}+\widehat{R}_{ij}^{n-\frac{1}{2}},
\end{aligned}
\end{equation}
where $$\widehat{R}_{ij}^{n-\frac{1}{2}}=\frac{\tau}{4\mu}\delta_x^2\delta_y^2\frac{U_{ij}^n-U_{ij}^{n-1}}{\tau},$$
and it is clear that
$$\Big|\widehat{R}_{ij}^{n-\frac{1}{2}}\Big|\leq C_5\tau^3|\ln\tau|.$$
Also, for the initial and boundary value conditions, we have
\begin{equation}
\begin{aligned}
&U_{ij}^0=\psi_1(x_i,y_j),\ (x_i,y_j)\in \Omega ,
\end{aligned}
\end{equation}
\begin{equation}\label{eq:Ubou}
\begin{aligned}
&U_{ij}^n=\phi(x_i,y_j,t_n),\ (x_i,y_j)\in \partial\Omega,\ 0\leq n\leq N.
\end{aligned}
\end{equation}

Let $u_{ij}^n$ be the numerical approximation to $u(x_i,y_j,t_n)$.
Neglecting the small term $R_{ij}^{n-\frac{1}{2}}$, $\widetilde{R}_{ij}^{n-\frac{1}{2}}$ and $\widehat{R}_{ij}^{n-\frac{1}{2}}$ in \eqref{eq:U}, and
using $u_{ij}^n$ instead of $U_{ij}^n$ in \eqref{eq:U}$-$\eqref{eq:Ubou}, we construct the difference scheme for
\eqref{eq:equation}$-$\eqref{eq:initiall} as follows:
\begin{align}\label{eq:scheme}
&\Delta\beta\sum_{l=1}^Kp(\beta_l)\frac{\tau^{1-\beta_l}}{\Gamma(3-\beta_l)}
\bigg[a_0^{(\beta_l)}\delta_tu_{ij}^{n-\frac{1}{2}}-\sum_{k=1}^{n-1}\big(a_{n-k-1}^{(\beta_l)}-
a_{n-k}^{(\beta_l)}\big)\delta_tu_{ij}^{k-\frac{1}{2}}\nonumber\\
&-a_{n-1}^{(\beta_l)}(\psi_2)_{ij}\bigg]\nonumber
+\frac{\tau}{4\mu}\delta_x^2\delta_y^2\frac{u_{ij}^n-u_{ij}^{n-1}}{\tau}\nonumber\\
=&\delta_x^2u_{ij}^{n-\frac{1}{2}}+\delta_y^2u_{ij}^{n-\frac{1}{2}}
+f\big(x_i,y_j,t_{n-1},u_{ij}^{n-1}\big),\nonumber\\
&1\leq i\leq M_1-1,\ 1\leq j\leq M_2-1,\ 1\leq n\leq N,\\ \label{eq:schini}
&u_{ij}^0=(\psi_1)_{ij},\   1 \leq i\leq M_1-1,\ 1\leq j\leq M_2-1,\\  \label{eq:schboun}
&u_{ij}^n=\phi_{ij}^n, \ (i,j)\in \gamma=\big\{(i,j)\ |\ (x_i,y_j)\in\partial\Omega\big\},\  0 \leq n\leq N,
\end{align}
where
$$(\psi_1)_{ij}=\psi_1(x_i,y_j),\ (\psi_2)_{ij}=\psi_2(x_i,y_j),\ 1 \leq i\leq M_1-1,\ 1\leq j\leq M_2-1,$$
and
$$\phi_{ij}^n=\phi(x_i,y_j,t_n),\quad (i,j)\in\gamma,\quad 0 \leq n\leq N.$$

Notice $a_0^{(\beta_l)}=1$, then Eq.~\eqref{eq:scheme} can be rewritten as:
\begin{equation*}\label{eq:add}
\begin{aligned}
&\Delta\beta\sum_{l=1}^Kp(\beta_l)\frac{1}{\tau^{\beta_l}\Gamma{(3-\beta_l)}}u_{ij}^n
-\frac{1}{2}\delta_x^2u_{ij}^n-\frac{1}{2}\delta_y^2u_{ij}^n
+\frac{1}{4\mu}\delta_x^2\delta_y^2u_{ij}^n\\
=&\Delta\beta\sum_{l=1}^Kp(\beta_l)\frac{1}{\tau^{\beta_l}\Gamma{(3-\beta_l)}}
\bigg[u_{ij}^{n-1}+\sum_{k=1}^{n-1}\big(a_{n-k-1}^{(\beta_l)}-a_{n-k}^{(\beta_l)}\big)\big(u_{ij}^k-u_{ij}^{k-1}\big)\\
&+\tau a_{n-1}^{(\beta_l)}(\psi_2)_{ij}\bigg]+\frac{1}{2}\delta_x^2u_{ij}^{n-1}+\frac{1}{2}\delta_y^2u_{ij}^{n-1}
+\frac{1}{4\mu}\delta_x^2\delta_y^2u_{ij}^{n-1}+f\big(x_i,y_j,t_{n-1},u_{ij}^{n-1}\big),
\end{aligned}
\end{equation*}
or
\begin{equation*}
\begin{aligned}
&\left(\sqrt{\mu}I-\frac{1}{2\sqrt{\mu}}\delta_x^2\right)\bigg(\sqrt{\mu}I-\frac{1}{2\sqrt{\mu}}\delta_y^2\bigg)u_{ij}^n\\
=&\left(\sqrt{\mu}I+\frac{1}{2\sqrt{\mu}}\delta_x^2\right)\bigg(\sqrt{\mu}I+\frac{1}{2\sqrt{\mu}}\delta_y^2\bigg)u_{ij}^{n-1}
+\Delta\beta\sum_{l=1}^Kp(\beta_l)\frac{1}{\tau^{\beta_l}\Gamma(3-\beta_l)}\cdot\\
&\bigg[\sum_{k=1}^{n-1}\big(a_{n-k-1}^{(\beta_l)}-a_{n-k}^{(\beta_l)}\big)\big(u_{ij}^k-u_{ij}^{k-1}\big)+\tau a_{n-1}^{(\beta_l)}(\psi_2)_{ij}\bigg]
+f\big(x_i,y_j,t_{n-1},u_{ij}^{n-1}\big),
\end{aligned}
\end{equation*}
where $I$ denotes the identity operator.

Let
$$u_{ij}^{\ast}=\left(\sqrt{\mu}I-\frac{1}{2\sqrt{\mu}}\delta_y^2\right)u_{ij}^n.$$
Together with \eqref{eq:schini} and \eqref{eq:schboun} the ADI difference scheme is derived, and the procedure can be executed as follows:

On each time level $t=t_n$ $(1\leq n\leq N)$, firstly, for all fixed $y=y_j$ $(1\leq j\leq M_2-1)$,
solving a set of $M_1-1$ equations at the mesh points $x_i$ $(1\leq i\leq M_1-1)$ to get the
intermediate solution $u_{ij}^{\ast}$:
\begin{equation}\label{eq:x}
\left\{
\begin{aligned}
&\left(\sqrt{\mu}I-\frac{1}{2\sqrt{\mu}}\delta_x^2\right)u_{ij}^\ast
=\left(\sqrt{\mu}I+\frac{1}{2\sqrt{\mu}}\delta_x^2\right)\bigg(\sqrt{\mu}I+\frac{1}{2\sqrt{\mu}}\delta_y^2\bigg)u_{ij}^{n-1}\\
&+\Delta\beta\sum_{l=1}^Kp(\beta_l)\frac{1}{\tau^{\beta_l}\Gamma(3-\beta_l)}
\bigg[\sum_{k=1}^{n-1}\big(a_{n-k-1}^{(\beta_l)}-a_{n-k}^{(\beta_l)}\big)\big(u_{ij}^k-u_{ij}^{k-1}\big)+\tau a_{n-1}^{(\beta_l)}(\psi_2)_{ij}\bigg]\\
&+f\big(x_i,y_j,t_{n-1},u_{ij}^{n-1}\big),\quad 1\leq i\leq M_1-1,\\
&u_{0j}^\ast=\bigg(\sqrt{\mu}I-\frac{1}{2\sqrt{\mu}}\delta_y^2\bigg)u_{0j}^n,\quad
u_{M_1j}^\ast=\bigg(\sqrt{\mu}I-\frac{1}{2\sqrt{\mu}}\delta_y^2\bigg)u_{M_1j}^n;
\end{aligned}\right.
\end{equation}
afterwards, for all fixed $x=x_i$ $(1\leq i\leq M_1-1)$, by computing a set of $M_2-1$ equations at the mesh points
$y_j$ $(1\leq j\leq M_2-1)$, the solution $u_{ij}^n$ can be obtained:
\begin{equation}\label{eq:y}
\left\{
\begin{aligned}
&\bigg(\sqrt{\mu}I-\frac{1}{2\sqrt{\mu}}\delta_y^2\bigg)u_{ij}^n=u_{ij}^\ast, \quad 1\leq j\leq M_2-1,\\
&u_{i0}^n=\phi(x_i,y_0,t_n),\quad u_{iM_2}^n=\phi(x_i,y_{M_2},t_n).
\end{aligned}\right.
\end{equation}

\section{Analysis of the ADI difference scheme}\label{sec:analysis}
\subsection{Solvability}
It is clear that the ADI scheme~\eqref{eq:x}$-$\eqref{eq:y} is a linear tridiagonal system in unknowns,
and the coefficient matrices are strictly diagonally dominant. Thus the scheme~\eqref{eq:x}$-$\eqref{eq:y} has a
unique solution. This result can be written as following.

\begin{theorem}
The ADI difference scheme~\eqref{eq:x}$-$\eqref{eq:y} is uniquely solvable.
\end{theorem}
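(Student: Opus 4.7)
The plan is to verify that each of the two half-step linear systems arising from the ADI factorization has a strictly diagonally dominant tridiagonal coefficient matrix, and then invoke the standard fact that strict diagonal dominance implies nonsingularity. I would proceed by induction on $n$: assuming $u_{ij}^{k}$ are known for $0 \le k \le n-1$, the right-hand side of \eqref{eq:x} and the boundary data $u_{0j}^{\ast}$ and $u_{M_1 j}^{\ast}$ are determined explicitly, so \eqref{eq:x} reduces, for each fixed $j$ with $1 \le j \le M_2 - 1$, to a linear system of $M_1 - 1$ equations in the $M_1 - 1$ unknowns $u_{ij}^{\ast}$, $1 \le i \le M_1 - 1$.

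Next, I would expand the operator $\sqrt{\mu}I - \frac{1}{2\sqrt{\mu}}\delta_x^2$ acting on $u^{\ast}$: using $\delta_x^2 u_{ij}^{\ast} = h_1^{-2}(u_{i+1,j}^{\ast} - 2u_{ij}^{\ast} + u_{i-1,j}^{\ast})$, the coefficient matrix is tridiagonal with diagonal entries equal to $\sqrt{\mu} + (\sqrt{\mu}\,h_1^2)^{-1}$ and off-diagonal entries equal to $-(2\sqrt{\mu}\,h_1^2)^{-1}$. Since $p(\beta_l) > 0$ and $\Gamma(3-\beta_l) > 0$ for every $\beta_l \in (1,2)$, the definition $\mu = \Delta\beta \sum_{l=1}^{K} p(\beta_l)/(\tau^{\beta_l}\Gamma(3-\beta_l))$ gives $\mu > 0$. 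Consequently each interior diagonal entry exceeds the sum of the moduli of its off-diagonals by exactly $\sqrt{\mu} > 0$, so the matrix is strictly diagonally dominant and hence nonsingular; this determines $u_{ij}^{\ast}$ uniquely for every interior $i$.

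The second half-step \eqref{eq:y}, for each fixed $i$ with $1 \le i \le M_1 - 1$, is an analogous tridiagonal system of $M_2 - 1$ equations in $u_{ij}^n$, $1 \le j \le M_2 - 1$, with boundary values $u_{i0}^n$ and $u_{iM_2}^n$ prescribed by $\phi$. Repeating the same diagonal-dominance calculation with $\delta_y^2$ and $h_2$ in place of $\delta_x^2$ and $h_1$ yields a strictly diagonally dominant coefficient matrix, so $u_{ij}^n$ is uniquely determined. Combining the two stages completes the induction step, and noting that $u_{ij}^0$ is prescribed by \eqref{eq:schini}--\eqref{eq:schboun} provides the base case.

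There is essentially no obstacle here, since the claim is just a verification; the only point worth making explicit is the positivity of $\mu$, which follows at once from the positivity of $p$ together with $\Gamma(3-\beta_l) > 0$. In particular, the argument works for any $h_1, h_2, \tau > 0$, so unique solvability is unconditional with respect to the mesh.
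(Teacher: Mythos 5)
Your proposal is correct and follows exactly the paper's argument: the paper likewise observes that each half-step of \eqref{eq:x}--\eqref{eq:y} is a tridiagonal linear system with a strictly diagonally dominant coefficient matrix and concludes unique solvability, though it states this in one sentence without the explicit entry-by-entry verification or the remark that $\mu>0$. Your write-up simply supplies the details the paper leaves implicit.
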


\subsection{Stability}
In this subsection we prove the unconditional stability and the convergence of the difference scheme
\eqref{eq:x}$-$\eqref{eq:y}. We start with some auxiliary definitions and useful results.

Denote the space of grid functions on $\bar{\Omega}_h$
$$\mathcal{V}_h=\{v\mid v=\{v_{ij}\mid (x_i,y_j)\in \bar{\Omega}_h\}\ and\ v_{ij}=0\ if\ (x_i,y_j)\in
\partial\Omega_h\}.$$
For any grid function $v\in \mathcal{V}_h$, the following discrete norms and Sobolev seminorm are introduced:
\begin{equation*}
\|v\|=\sqrt{h_1h_2\sum_{i=1}^{M_1-1}\sum_{j=1}^{M_2-1}|v_{ij}|^2},\quad
\|\delta_x\delta_yv\|=\sqrt{h_1h_2\sum_{i=1}^{M_1}\sum_{j=1}^{M_2}|\delta_x\delta_yv_{i-\frac{1}{2},j-\frac{1}{2}}|^2},
\end{equation*}
\begin{equation*}
\|\delta_xv\|=\sqrt{h_1h_2\sum_{i=1}^{M_1}\sum_{j=1}^{M_2-1}|\delta_xv_{i-\frac{1}{2},j}|^2},\quad
\|\delta_yv\|=\sqrt{h_1h_2\sum_{i=1}^{M_1-1}\sum_{j=1}^{M_2}|\delta_yv_{i,j-\frac{1}{2}}|^2},
\end{equation*}
\begin{equation*}
\|\Delta_hv\|=\sqrt{h_1h_2\sum_{i=1}^{M_1-1}\sum_{j=1}^{M_2-1}|\Delta_hv_{ij}|^2},\quad
|v|_1=\sqrt{\|\delta_xv\|^2+\|\delta_yv\|^2}.
\end{equation*}
\begin{lemma}\label{lem:L2}
\cite{SamarskiiAndreev1976} For any grid function $v\in \mathcal{V}_h$, $\|v\|\leq \frac{1}{2\sqrt{3}}|v|_1$.
\end{lemma}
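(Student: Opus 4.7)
The plan is to establish the stated bound as a discrete Friedrichs (Poincar\'e) inequality on the rectangular grid, using the discrete fundamental theorem of calculus together with the Cauchy--Schwarz inequality. Because the result is attributed to Samarskii and Andreev, I expect the argument to follow a standard route; the real content is to see how the numerical constant $1/(2\sqrt{3})$ emerges.

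First I would exploit the homogeneous Dirichlet condition built into $\mathcal{V}_h$ to write each interior nodal value as a telescoping sum of first differences. For fixed $j$, since $v_{0j}=v_{M_1 j}=0$,
$$v_{ij}=h_1\sum_{k=1}^{i}\delta_x v_{k-\frac12,j}=-h_1\sum_{k=i+1}^{M_1}\delta_x v_{k-\frac12,j},$$
with a completely symmetric pair of identities coming from $v_{i0}=v_{iM_2}=0$. Applying Cauchy--Schwarz to each representation and keeping the weaker of the two bounds at each node gives a pointwise estimate of the form
$$|v_{ij}|^{2}\le \min(i,M_1-i)\,h_1\cdot h_1\sum_{k=1}^{M_1}|\delta_x v_{k-\frac12,j}|^{2}.$$
Multiplying by $h_1h_2$, summing over the interior indices, and evaluating the elementary sum $h_1\sum_{i=1}^{M_1-1}\min(i,M_1-i)$ then yields a one--direction Friedrichs estimate $\|v\|^{2}\le C_x\|\delta_x v\|^{2}$ with an explicit constant; the symmetric argument in the $y$--direction produces $\|v\|^{2}\le C_y\|\delta_y v\|^{2}$.

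To turn these one--direction bounds into an estimate involving $|v|_1^{2}=\|\delta_x v\|^{2}+\|\delta_y v\|^{2}$, I would take a convex combination of the two inequalities and choose the weights so that the $x$-- and $y$--pieces reassemble into $|v|_1^{2}$ with a common prefactor; with the optimal choice this factor collapses to $1/12$, and taking square roots delivers the constant $1/(2\sqrt{3})$ claimed in the lemma.

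The step I expect to be the main obstacle is pinning down this sharp numerical constant. The one--direction estimates themselves are routine, but the bookkeeping that balances the $x$-- and $y$--contributions so that the coefficient becomes exactly $1/12$ (rather than some strictly larger value) is delicate, and must also be consistent with the implicit scaling of the domain $\Omega=(0,L_1)\times(0,L_2)$ in the paper. If the arithmetic becomes cumbersome, I would defer to the derivation in \cite{SamarskiiAndreev1976} and simply invoke the inequality as a black box in the subsequent stability and convergence analysis.
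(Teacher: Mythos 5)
First, a point of comparison: the paper offers no proof of this lemma at all --- it is quoted from \cite{SamarskiiAndreev1976} and used as a black box --- so your proposal can only be judged against what the inequality actually requires. Your overall strategy (telescoping from the homogeneous boundary data, Cauchy--Schwarz, one-directional Friedrichs bounds, then a weighted combination of the $x$- and $y$-estimates) is the standard and correct route, and you are right to flag the domain scaling: the general form of the inequality is $\|v\|^2\le \big(6/L_1^2+6/L_2^2\big)^{-1}|v|_1^2$, which collapses to the stated $1/12$ precisely when the domain is normalized (e.g.\ $L_1=L_2=1$); for the paper's own test domain $(0,\pi)^2$ the constant would be larger.

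The genuine gap is in your one-dimensional estimate, not in the $x$--$y$ balancing you worry about. The pointwise bound you propose, $|v_{ij}|^{2}\le \min(i,M_1-i)\,h_1\cdot h_1\sum_{k}|\delta_x v_{k-\frac12,j}|^{2}$, gives after summation $h_1^2\sum_{i=1}^{M_1-1}\min(i,M_1-i)=L_1^2/4$ (for even $M_1$), hence only $\|v\|^2\le \tfrac{L_1^2}{4}\|\delta_x v\|^2$. Combining this with the analogous $y$-bound by the optimal convex combination yields $\|v\|^2\le \big(4/L_1^2+4/L_2^2\big)^{-1}|v|_1^2$, i.e.\ the constant $1/8$ on the unit square rather than $1/12$; no choice of weights between the two directional inequalities can recover the missing factor $3/2$, because it must be won inside the one-dimensional argument. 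The standard fix is to take, at each node, the convex combination of the forward and backward telescoping representations with weights $\tfrac{M_1-i}{M_1}$ and $\tfrac{i}{M_1}$, which replaces $\min(i,M_1-i)$ by $\tfrac{i(M_1-i)}{M_1}$; since $\sum_{i=1}^{M_1-1}i(M_1-i)=\tfrac{M_1(M_1^2-1)}{6}$, this gives $\|v\|^2\le \tfrac{L_1^2-h_1^2}{6}\|\delta_x v\|^2\le \tfrac{L_1^2}{6}\|\delta_x v\|^2$, and then your final weighted combination of the two directions does produce $\|v\|\le \tfrac{1}{2\sqrt3}|v|_1$ on the unit square. As written, your argument proves the inequality only with the weaker constant $1/(2\sqrt2)$.
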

\begin{lemma}\label{lem:L22}
\cite{Sun2009} For any grid function $v\in \mathcal{V}_h$, $|v|_1\leq \frac{1}{2\sqrt{3}}\|\Delta_hv\|$.
\end{lemma}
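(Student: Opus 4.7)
The plan is to obtain the bound by combining a discrete Green's identity with the Cauchy--Schwarz inequality and then bootstrapping via Lemma \ref{lem:L2}. The core observation is that, for a grid function $v \in \mathcal{V}_h$ (which in particular vanishes on $\partial \Omega_h$), the seminorm $|v|_1$ can be rewritten as $-(v, \Delta_h v)_h$ with the natural discrete inner product $(u,w)_h = h_1 h_2 \sum_{i,j} u_{ij} w_{ij}$. This reduces the task of bounding $|v|_1$ by $\|\Delta_h v\|$ to a straightforward application of Cauchy--Schwarz, followed by absorbing a factor of $\|v\|$ using the Poincar\'e-type estimate already furnished by Lemma \ref{lem:L2}.

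In more detail, first I would establish the identity
$$-h_1 h_2 \sum_{i=1}^{M_1-1}\sum_{j=1}^{M_2-1} v_{ij}\,\Delta_h v_{ij} \;=\; \|\delta_x v\|^2 + \|\delta_y v\|^2 \;=\; |v|_1^2.$$
This comes from two one-dimensional summation-by-parts calculations: for each fixed $j$, summing $-v_{ij}\,\delta_x^2 v_{ij}$ over $i$ and shifting the difference operator onto $v$ produces $\sum_i |\delta_x v_{i-\frac12,j}|^2$, with the boundary contributions at $i=0$ and $i=M_1$ vanishing because $v\in \mathcal{V}_h$; the analogous step in the $y$-direction handles the $\delta_y^2$ term. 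Next, Cauchy--Schwarz applied to $(v, -\Delta_h v)_h$ gives
$$|v|_1^2 \;\leq\; \|v\|\,\|\Delta_h v\|.$$
Invoking Lemma \ref{lem:L2} to replace $\|v\|$ by $\tfrac{1}{2\sqrt{3}}|v|_1$ yields
$$|v|_1^2 \;\leq\; \frac{1}{2\sqrt{3}}\,|v|_1\,\|\Delta_h v\|,$$
and dividing through by $|v|_1$ (the case $|v|_1 = 0$ forces $v\equiv 0$ and is trivial) delivers the claim.

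The only step that requires real bookkeeping is the discrete Green's identity in the first display: one must pair the index conventions of $\delta_x^2$ (acting at interior nodes $1 \le i \le M_1-1$) with those of $\|\delta_x v\|$ (summed over half-integer nodes $1 \le i \le M_1$) so that the telescoped boundary terms land exactly on $i=0$ and $i=M_1$, where $v$ vanishes. Beyond this routine indexing, the argument is purely mechanical; there is no genuine analytic obstacle, since Lemma \ref{lem:L2} already does the heavy lifting needed to close the estimate.
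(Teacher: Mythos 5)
Your proof is correct: the discrete Green identity $-(v,\Delta_h v)_h=|v|_1^2$, Cauchy--Schwarz, and an appeal to Lemma \ref{lem:L2} yield $|v|_1^2\leq \frac{1}{2\sqrt{3}}|v|_1\|\Delta_h v\|$ with exactly the stated constant, and the degenerate case $|v|_1=0$ is handled properly. The paper itself offers no proof (it only cites \cite{Sun2009}), and your argument is the standard derivation of this embedding from the discrete Poincar\'e inequality, so there is nothing substantive to contrast.
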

\begin{lemma}\label{lem:126}
\cite{Sun2009} For any $G=\{G_1,G_2,G_3,\ldots\}$ and $q$, we have
\begin{equation*}
\begin{aligned}
&\sum_{n=1}^{m}\left[b_0G_n-\sum_{k=1}^{n-1}(b_{n-k-1}-b_{n-k})G_k-b_{n-1}q\right]G_n\\
\geq & \frac{t_m^{1-\alpha}}{2}\tau\sum_{n=1}^mG_n^2-\frac{t_m^{2-\alpha}}{2(2-\alpha)}q^2,\qquad m=1,2,3,\cdots,
\end{aligned}
\end{equation*}
where $$b_l=\frac{\tau^{2-\alpha}}{2-\alpha}[(l+1)^{2-\alpha}-l^{2-\alpha}],\qquad l=0,1,2,\cdots.$$
\end{lemma}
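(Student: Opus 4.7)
The plan is to reduce the inequality to a discrete-convolutional quadratic-form estimate, exploit the monotonicity and convexity of the kernel $\{b_l\}$, and then extract the $q$-contribution through Young's inequality.

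First, setting $G_0 := q$, Abel summation yields the telescoping identity
\[
b_0 G_n - \sum_{k=1}^{n-1}(b_{n-k-1} - b_{n-k}) G_k - b_{n-1} q \;=\; \sum_{k=1}^{n} b_{n-k}\bigl(G_k - G_{k-1}\bigr).
\]
Writing $Q_m$ for the left-hand side of the lemma, this recasts it as the bilinear form
\[
Q_m \;=\; \sum_{n=1}^{m} G_n \sum_{k=1}^{n} b_{n-k}\bigl(G_k - G_{k-1}\bigr),
\]
a Toeplitz-type quadratic form in $\{G_n\}$ with kernel $\{b_l\}$.

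Second, because $0 < 2-\alpha < 1$, analysis of $f(x)=x^{2-\alpha}$ gives $f'>0$, $f''<0$, and $f'''>0$; hence the kernel is positive, strictly decreasing, and convex, i.e.\ $b_{l-1} - 2 b_l + b_{l+1} \ge 0$. These are precisely the three hypotheses on which the positivity of the discretised Caputo-type form rests in the monograph cited as \textbf{[Sun\,2009]}. I would follow that standard argument, based on the elementary identity $2 G_n(G_n - G_{n-1}) = G_n^2 - G_{n-1}^2 + (G_n - G_{n-1})^2$ together with rearrangement and the convexity inequality, to show that every diagonal contribution to $Q_m$ is controlled below by a positive multiple of $G_n^2$.

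Third, to obtain the explicit constants I would split $Q_m = \tilde{Q}_m - q \sum_{n=1}^{m} b_{n-1} G_n$, where $\tilde{Q}_m$ denotes the same form with $G_0 = 0$. Young's inequality applied to the cross term gives
\[
q \sum_{n=1}^m b_{n-1} G_n \;\le\; \tfrac{1}{2} \sum_{n=1}^m b_{n-1} G_n^2 \;+\; \tfrac{q^2}{2} \sum_{n=1}^m b_{n-1},
\]
and the telescope $\sum_{n=1}^m b_{n-1} = \frac{\tau^{2-\alpha}}{2-\alpha}\, m^{2-\alpha} = \frac{t_m^{2-\alpha}}{2-\alpha}$ produces exactly the claimed $-\frac{t_m^{2-\alpha}}{2(2-\alpha)}q^2$ term. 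The residual penalty $\tfrac{1}{2}\sum_n b_{n-1} G_n^2$ is absorbed into the positive part of $\tilde{Q}_m$, leaving at least $\tfrac{1}{2} b_{m-1}\sum_n G_n^2$; the mean-value lower bound $b_{m-1} \ge \tau^{2-\alpha} m^{1-\alpha} = \tau\, t_m^{1-\alpha}$ (which uses that $x^{1-\alpha}$ is decreasing) then delivers the first term of the stated estimate.

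The main obstacle is the quantitative part of step two: after the Young absorption, one must verify that every $G_n^2$ retains an effective weight of at least $\tfrac{1}{2} b_{m-1}$, rather than a smaller (possibly vanishing) one. Achieving this requires discarding only the non-negative off-diagonal terms generated by the convexity inequality and carefully telescoping the diagonal weights so that the minimum kernel value $b_{m-1}$ governs the entire diagonal. The remaining arithmetic is then a direct consequence of the telescoping identities above.
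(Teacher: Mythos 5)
The paper gives no proof of this lemma at all---it is quoted verbatim from the monograph \cite{Sun2009}---so the only meaningful comparison is with the standard argument there. Your bookkeeping at the two ends is sound: the Abel-summation identity in your first step is correct (with $G_0:=q$), the telescoping $\sum_{n=1}^m b_{n-1}=\tfrac{t_m^{2-\alpha}}{2-\alpha}$ and the mean-value bound $b_{m-1}\ge\tau t_m^{1-\alpha}$ are both right, and the final accounting in your third step does close the argument \emph{provided} the intermediate bound you invoke is available. The gap is exactly where you flag it: step two, as described, does not deliver that bound. The identity $2G_n(G_n-G_{n-1})=G_n^2-G_{n-1}^2+(G_n-G_{n-1})^2$ only handles the diagonal term $b_0G_n(G_n-G_{n-1})$ of the Toeplitz form, and convexity of $\{b_l\}$ is the hypothesis for qualitative positive-definiteness arguments; neither produces the quantitative statement you need, namely that after subtracting the Young penalty $\tfrac12\sum_n b_{n-1}G_n^2$ every diagonal entry still carries weight at least $\tfrac12 b_{m-1}$. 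You state this as ``the main obstacle'' and leave it unresolved, so the proof is incomplete at its crux.

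The missing step is in fact elementary and needs only positivity and monotonicity of $b_l$ (convexity is never used), and it is cleanest if you do \emph{not} pass to the Abel-summed form. Apply $G_kG_n\le\tfrac12(G_k^2+G_n^2)$ to each cross term of the original bracket, telescope $\sum_{k=1}^{n-1}(b_{n-k-1}-b_{n-k})=b_0-b_{n-1}$, and then exchange the order of summation to get
\begin{equation*}
\widetilde{Q}_m\;\ge\;\frac12\bigl(b_0+b_{m-1}\bigr)G_m^2+\frac12\sum_{k=1}^{m-1}\bigl(b_{k-1}+b_{m-k}\bigr)G_k^2 .
\end{equation*}
Subtracting your penalty $\tfrac12\sum_{n=1}^m b_{n-1}G_n^2$ leaves coefficient $\tfrac12 b_0$ at $k=m$ and $\tfrac12 b_{m-k}$ at $k<m$, each at least $\tfrac12 b_{m-1}$ since $b_l$ is decreasing; from there your step three finishes the proof exactly as written. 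With this replacement for step two your outline becomes a complete and correct proof, essentially identical to the standard one.
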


The discrete Gronwall's inequality is also introduced below since it is necessary to prove the stability and convergence of
the proposed method.
\begin{lemma}\label{lem:gronwall}
\cite{QuarteroniValli2008} Assume that $k_n$ and $p_n$ are nonnegative sequences, and the sequence $\Phi_n$ satisfies
$$\Phi_0\leq g_0,\qquad \Phi_n\leq g_0+\sum_{l=0}^{n-1}p_l+\sum_{l=0}^{n-1}k_l\Phi_l,\qquad n\geq 1,$$
where $g_0\geq 0$. Then the sequence $\Phi_n$ satisfies
$$\Phi_n\leq\left(g_0+\sum_{l=0}^{l-1}p_l\right)\exp\left(\sum_{l=0}^{n-1}k_l\right),\qquad n\geq 1.$$
\end{lemma}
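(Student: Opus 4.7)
My plan is to prove the discrete Gronwall inequality by induction on $n$ after first packaging the non-recursive part of the hypothesis into a monotone "source" sequence. Concretely, I would set $\Psi_n = g_0 + \sum_{l=0}^{n-1} p_l$, which is nondecreasing because $p_l \geq 0$, and rewrite the hypothesis as $\Phi_0 \leq \Psi_0$ together with $\Phi_n \leq \Psi_n + \sum_{l=0}^{n-1} k_l \Phi_l$ for $n \geq 1$. The target inequality then becomes the clean assertion $\Phi_n \leq \Psi_n \exp\bigl(\sum_{l=0}^{n-1} k_l\bigr)$.

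The base case $n=0$ is immediate from $\Phi_0 \leq g_0 = \Psi_0$ (empty exponent sum). For the inductive step, I would substitute the inductive hypothesis on each $\Phi_l$, $l \leq n-1$, into the right-hand side of the recurrence. Using monotonicity of $\Psi$ to pull $\Psi_n$ out as a common factor reduces the estimate to controlling $\sum_{l=0}^{n-1} k_l \exp\bigl(\sum_{j=0}^{l-1} k_j\bigr)$.

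The heart of the argument is the elementary inequality $k_l \leq e^{k_l} - 1$, valid for $k_l \geq 0$, which lets me rewrite each summand as $\exp\bigl(\sum_{j=0}^{l} k_j\bigr) - \exp\bigl(\sum_{j=0}^{l-1} k_j\bigr)$. The sum then telescopes to $\exp\bigl(\sum_{l=0}^{n-1} k_l\bigr) - 1$, and combining with the additive $\Psi_n$ term closes the induction in a single line.

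I expect the main obstacle to be purely organizational rather than conceptual: one must handle the empty-sum convention at $l=0$ carefully (so that $\sum_{j=0}^{-1} k_j = 0$), exploit the monotonicity of $\Psi_n$ in the right place so that $\Psi_l$ can be replaced by $\Psi_n$ inside the inductive estimate, and invoke the nonnegativity of $k_l$ exactly once to justify the pointwise inequality $k_l \leq e^{k_l}-1$. Since this is a classical result available essentially verbatim in standard references such as \cite{QuarteroniValli2008}, I anticipate no further complication beyond presenting these bookkeeping steps cleanly.
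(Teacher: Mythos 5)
Your argument is correct: the strong induction on $n$, the monotone majorant $\Psi_n=g_0+\sum_{l=0}^{n-1}p_l$, and the telescoping step via $k_l\le e^{k_l}-1$ together give exactly the claimed bound, and you invoke the nonnegativity of $k_l$ and $p_l$ in precisely the places where it is needed. The paper itself offers no proof --- it simply cites \cite{QuarteroniValli2008} --- so there is nothing to compare against; your write-up is the standard textbook argument, and you have (sensibly) read the statement's $\sum_{l=0}^{l-1}p_l$ as the intended $\sum_{l=0}^{n-1}p_l$.
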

Since the ADI difference scheme \eqref{eq:x}$-$\eqref{eq:y} is equivalent to \eqref{eq:scheme}$-$\eqref{eq:schboun} if the intermediate variable $u^\ast$ is eliminated,
we analyze the stability and convergence by employing the difference scheme \eqref{eq:scheme}$-$\eqref{eq:schboun}.

Assume that $\widetilde u_{ij}^n$ is the approximate solution of $u_{ij}^n$,
which is the exact solution of the scheme~\eqref{eq:scheme}$-$\eqref{eq:schboun}.
Denote $\varepsilon_{ij}^n=u_{ij}^n-\widetilde u_{ij}^n,\ 0 \leq i\leq M_1,\ 0 \leq j\leq M_2, \ 0 \leq n\leq N$,
then we have the perturbation error equations
\begin{align}\label{eq:pertt}
&\Delta\beta\sum_{l=1}^Kp(\beta_l)\frac{\tau^{1-\beta_l}}{\Gamma(3-\beta_l)}
\bigg[a_0^{(\beta_l)}\delta_t\varepsilon_{ij}^{n-\frac{1}{2}}-\sum_{k=1}^{n-1}\big(a_{n-k-1}^{(\beta_l)}-
a_{n-k}^{(\beta_l)}\big)\delta_t\varepsilon_{ij}^{k-\frac{1}{2}}-a_{n-1}^{(\beta_l)}(\psi_2^\ast)_{ij}\bigg]\nonumber\\
&+\frac{\tau}{4\mu}\delta_x^2\delta_y^2\frac{\varepsilon_{ij}^n-\varepsilon_{ij}^{n-1}}{\tau}\nonumber\\
&=\delta_x^2\varepsilon_{ij}^{n-\frac{1}{2}}+\delta_y^2\varepsilon_{ij}^{n-\frac{1}{2}}
+f\big(x_i,y_j,t_{n-1},u_{ij}^{n-1}\big)-f\big(x_i,y_j,t_{n-1},\widetilde u_{ij}^{n-1}\big),\nonumber\\
&1\leq i\leq M_1-1,\ 1\leq j\leq M_2-1,\ 1\leq n\leq N,\\
&\varepsilon_{ij}^0=(\psi_1)_{ij}-(\widetilde\psi_1)_{ij},\   1 \leq i\leq M_1-1,\ 1\leq j\leq M_2-1,\nonumber\\
&\varepsilon_{ij}^n=0, \ (i,j)\in \gamma,\  0 \leq n\leq N \nonumber,
\end{align}
where $$(\psi_2^\ast)_{ij}=(\psi_2)_{ij}-(\widetilde\psi_2)_{ij}.$$

\begin{theorem}\label{them:stability}
Assume that the condition \eqref{eq:lip} is satisfied,
then the difference scheme \eqref{eq:x}$-$\eqref{eq:y} is unconditionally stable.
\end{theorem}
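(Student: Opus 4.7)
\emph{Plan.} I would attack this by the standard discrete energy method, using $\delta_{t}\varepsilon_{ij}^{n-\frac{1}{2}}$ as the test quantity. Concretely, I would multiply the perturbation equation~\eqref{eq:pertt} by $h_1h_2\,\delta_{t}\varepsilon_{ij}^{n-\frac{1}{2}}$, sum over the interior indices $1\le i\le M_1-1$, $1\le j\le M_2-1$, and then sum the resulting identity over $n=1,\dots,m$ for an arbitrary fixed $1\le m\le N$. The goal is to produce an inequality of the form $|\varepsilon^{m}|_1^{2}\le C\bigl(|\varepsilon^{0}|_1^{2}+\|\psi_2^{\ast}\|^{2}\bigr)+C\tau\sum_{k=0}^{m-1}|\varepsilon^{k}|_1^{2}$ to which Lemma~\ref{lem:gronwall} applies.

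For the fractional-derivative term, I would treat each $\beta_l$ separately. Noting that $a_k^{(\beta_l)}=\frac{2-\beta_l}{\tau^{2-\beta_l}}b_k^{(\beta_l)}$ with $b_k^{(\beta_l)}=\frac{\tau^{2-\beta_l}}{2-\beta_l}\bigl[(k+1)^{2-\beta_l}-k^{2-\beta_l}\bigr]$, the inner bracket in \eqref{eq:pertt} matches the structure in Lemma~\ref{lem:126} with $G_n=\delta_{t}\varepsilon_{ij}^{n-\frac{1}{2}}$ and $q=(\psi_2^{\ast})_{ij}$. Applying Lemma~\ref{lem:126}, reweighting by the positive factor $\Delta\beta\,p(\beta_l)/[\tau\Gamma(2-\beta_l)]$, summing in $l$, and invoking the uniform bound $\sum_{l}\Delta\beta\,p(\beta_l)\le C_2$ established after~\eqref{eq:midsch}, I obtain a lower bound of the form $c_{\ast}\tau\sum_{n=1}^{m}\|\delta_{t}\varepsilon^{n-\frac{1}{2}}\|^{2}-C\|\psi_2^{\ast}\|^{2}$ for some $c_{\ast}>0$ independent of $\tau,h_1,h_2,\Delta\beta$.

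For the Laplacian terms on the right, summation by parts gives
\begin{equation*}
h_1h_2\sum_{i,j}\delta_{x}^{2}\varepsilon_{ij}^{n-\frac{1}{2}}\,\delta_{t}\varepsilon_{ij}^{n-\frac{1}{2}}=-\frac{1}{2\tau}\bigl(\|\delta_{x}\varepsilon^{n}\|^{2}-\|\delta_{x}\varepsilon^{n-1}\|^{2}\bigr),
\end{equation*}
and analogously for $\delta_{y}^{2}$; telescoping in $n$ produces $-\frac{1}{2\tau}\bigl(|\varepsilon^{m}|_1^{2}-|\varepsilon^{0}|_1^{2}\bigr)$. The ADI correction $\frac{\tau}{4\mu}\delta_{x}^{2}\delta_{y}^{2}\frac{\varepsilon^{n}-\varepsilon^{n-1}}{\tau}$, after two summations by parts, contributes $\frac{1}{4\mu\tau^{2}}\|\delta_{x}\delta_{y}(\varepsilon^{n}-\varepsilon^{n-1})\|^{2}\ge 0$, which I simply discard. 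The Lipschitz hypothesis~\eqref{eq:lip} controls the nonlinear contribution by $L_f\|\varepsilon^{n-1}\|\cdot\|\delta_{t}\varepsilon^{n-\frac{1}{2}}\|$; Young's inequality splits it into a piece absorbable into the $c_{\ast}\tau\sum\|\delta_{t}\varepsilon^{n-\frac{1}{2}}\|^{2}$ lower bound and a remainder of the form $C\tau\sum_{n}\|\varepsilon^{n-1}\|^{2}$, which by Lemma~\ref{lem:L2} is dominated by $C\tau\sum_{n}|\varepsilon^{n-1}|_1^{2}$.

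Assembling all pieces gives the advertised Gronwall-ready inequality, and Lemma~\ref{lem:gronwall} then yields $|\varepsilon^{m}|_1^{2}\le C\bigl(|\varepsilon^{0}|_1^{2}+\|\psi_2^{\ast}\|^{2}\bigr)e^{CT}$ for all $1\le m\le N$, with $C$ independent of $\tau,h_1,h_2,\Delta\beta$, which is unconditional stability. The principal obstacle I anticipate is the bookkeeping of the $\beta_l$-dependent constants: one must verify that the positive coefficient $c_{\ast}$ arising from Lemma~\ref{lem:126} after summation in $l$ is bounded away from zero uniformly in the discretization parameters, so that it can genuinely absorb both the Young's-inequality piece of the nonlinear term and the $\psi_2^{\ast}$ contribution without any step-size restriction. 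Verifying this uniform positivity—together with keeping track of the telescoping for the $\delta_{x}^{2}$ and $\delta_{y}^{2}$ contributions under the noncommuting weights in front of the fractional term—is the main technical hurdle.
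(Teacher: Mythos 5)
Your proposal follows essentially the same route as the paper's proof: testing the perturbation equation~\eqref{eq:pertt} against $\delta_t\varepsilon_{ij}^{n-\frac{1}{2}}$, invoking Lemma~\ref{lem:126} after the $a_k^{(\beta_l)}\to b_k^{(\beta_l)}$ rescaling, discarding the nonnegative ADI correction, telescoping the Laplacian terms, splitting the Lipschitz term by Young's inequality against the coefficient $K_m=\Delta\beta\sum_l p(\beta_l)t_m^{1-\beta_l}/\Gamma(2-\beta_l)$, and closing with the discrete Gronwall inequality. The uniform positivity you flag as the main hurdle is immediate since $t_m^{1-\beta_l}\ge T^{1-\beta_l}$ for $1<\beta_l<2$, so $K_m$ is bounded below independently of the mesh; the only cosmetic difference is that you conclude in the $|\cdot|_1$ seminorm whereas the paper passes to $\|\varepsilon^n\|$ via Lemmas~\ref{lem:L2} and~\ref{lem:L22}.
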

\begin{proof}
Let $$b_k^{(\beta_l)}=\frac{\tau^{2-{\beta_l}}}{2-{\beta_l}}a_k^{(\beta_l)},\quad 1\leq l\leq K,$$
then Eq. \eqref{eq:pertt} is equivalent to
\begin{equation}\label{eq:perterror}
\begin{aligned}
&\Delta\beta\sum_{l=1}^Kp(\beta_l)\frac{1}{\Gamma(2-\beta_l)\tau}
\bigg[b_0^{(\beta_l)}\delta_t\varepsilon_{ij}^{n-\frac{1}{2}}-\sum_{k=1}^{n-1}\big(b_{n-k-1}^{\beta_l}-
b_{n-k}^{(\beta_l)}\big)\delta_t\varepsilon_{ij}^{k-\frac{1}{2}}-b_{n-1}^{(\beta_l)}(\psi_2^\ast)_{ij}\bigg]\\
&+\frac{\tau}{4\mu}\delta_x^2\delta_y^2\frac{\varepsilon_{ij}^n-\varepsilon_{ij}^{n-1}}{\tau}\\
&=\delta_x^2\varepsilon_{ij}^{n-\frac{1}{2}}+\delta_y^2\varepsilon_{ij}^{n-\frac{1}{2}}
+f\big(x_i,y_j,t_{n-1},u_{ij}^{n-1}\big)-f\big(x_i,y_j,t_{n-1},\widetilde u_{ij}^{n-1}\big),\\
&1\leq i\leq M_1-1,\ 1\leq j\leq M_2-1,\ 1\leq n\leq N.
\end{aligned}
\end{equation}

Multiplying \eqref{eq:perterror} by $h_1h_2\tau\delta_t\varepsilon_{ij}^{n-\frac{1}{2}}$, summing up for $i$ from
$1$ to $M_1-1$, for $j$ from $1$ to $M_2-1$ and for $n$ from $1$ to $m$, we analyze each term
in the derived equation.
Firstly, by employing Lemma~\ref{lem:126}, we have
\begin{equation}\label{eq:left1}
\begin{aligned}
&\Delta\beta\sum_{l=1}^Kp(\beta_l)\frac{1}{\Gamma(2-\beta_l)}h_1h_2\sum_{i=1}^{M_1-1}\sum_{j=1}^{M_2-1}
\bigg\{\sum_{n=1}^m\Big[b_0^{(\beta_l)}\delta_t\varepsilon_{ij}^{n-\frac{1}{2}}-\\
&\sum_{k=1}^{n-1}\big(b_{n-k-1}^{(\beta_l)}-b_{n-k}^{(\beta_l)}\big)\delta_t\varepsilon_{ij}^{k-\frac{1}{2}}
-b_{n-1}^{(\beta_l)}(\psi_2^\ast)_{ij}\Big]
\delta_t\varepsilon_{ij}^{n-\frac{1}{2}}\bigg\}\\
\geq & \Delta\beta\sum_{l=1}^Kp(\beta_l)\frac{1}{\Gamma(2-\beta_l)}\Big[\frac{1}{2}t_m^{1-\beta_l}\tau
\sum_{n=1}^m\big\|\delta_t\varepsilon^{n-\frac{1}{2}}\big\|^2\\
&-\frac{t_m^{2-\beta_l}}{2(2-\beta_l)}h_1h_2\sum_{i=1}^{M_1-1}\sum_{j=1}^{M_2-1}(\psi_2^\ast)_{ij}^2\Big]\\
=&\frac{1}{2}\tau K_m\sum_{n=1}^m\big\|\delta_t\varepsilon^{n-\frac{1}{2}}\big\|^2
-\Delta\beta\sum_{l=1}^Kp(\beta_l)\frac{t_m^{2-\beta_l}}{2\Gamma(3-\beta_l)}\big\|\psi_2^\ast\big\|^2,
\end{aligned}
\end{equation}
where
$$K_m=\Delta\beta\sum_{l=1}^Kp(\beta_l)\frac{t_m^{1-\beta_l}}{\Gamma(2-\beta_l)}>0.$$
Whereafter using the discrete Green formula, we get
\begin{equation}
\begin{aligned}
&h_1h_2\tau\sum_{i=1}^{M_1-1}\sum_{j=1}^{M_2-1}\sum_{n=1}^m\frac{\tau}{4\mu}\delta_x^2\delta_y^2
\frac{\varepsilon_{ij}^{n}-\varepsilon_{ij}^{n-1}}{\tau}\delta_t\varepsilon_{ij}^{n-\frac{1}{2}}\\
=&\frac{1}{4\mu}\sum_{n=1}^mh_1h_2\sum_{i=1}^{M_1}\sum_{j=1}^{M_2}\bigg(\delta_x\delta_y
\Big(\varepsilon_{i-\frac{1}{2},j-\frac{1}{2}}^n-\varepsilon_{i-\frac{1}{2},j-\frac{1}{2}}^{n-1}\Big)\bigg)
\bigg(\delta_x\delta_y\Big(\varepsilon_{i-\frac{1}{2},j-\frac{1}{2}}^n-\varepsilon_{i-\frac{1}{2},j-\frac{1}{2}}^{n-1}\Big)\bigg)\\
=&\frac{1}{4\mu}\sum_{n=1}^m\big\|\delta_x\delta_y(\varepsilon^n-\varepsilon^{n-1})\big\|^2\geq0,
\end{aligned}
\end{equation}
and
\begin{equation}\label{eq:deltax}
\begin{aligned}
&\tau\sum_{n=1}^m\bigg[h_1h_2\sum_{i=1}^{M_1-1}\sum_{j=1}^{M_2-1}\Big(\delta_t\varepsilon_{ij}^{n-\frac{1}{2}}\Big)
\Big(\delta_x^2\varepsilon_{ij}^{n-\frac{1}{2}}\Big)\bigg]\\
=&-\tau\sum_{n=1}^m\bigg[h_1h_2\sum_{j=1}^{M_2-1}\sum_{i=1}^{M_1}\Big(\delta_x\varepsilon_{i-\frac{1}{2},j}^{n-\frac{1}{2}}\Big)
\Big(\delta_t\delta_x\varepsilon_{i-\frac{1}{2},j}^{n-\frac{1}{2}}\Big)\bigg]\\
=&-\tau\sum_{n=1}^m\bigg[h_1h_2\sum_{j=1}^{M_2-1}\sum_{i=1}^{M_1}\bigg(\frac{\delta_x\varepsilon_{i-\frac{1}{2},j}^n
+\delta_x\varepsilon_{i-\frac{1}{2},j}^{n-1}}{2}\bigg)\bigg(\frac{\delta_x\varepsilon_{i-\frac{1}{2},j}^n
-\delta_x\varepsilon_{i-\frac{1}{2},j}^{n-1}}{\tau}\bigg)\bigg]\\
=&-\frac{1}{2}\Big[\big\|\delta_x\varepsilon^m\big\|^2-\big\|\delta_x\varepsilon^0\big\|^2\Big].
\end{aligned}
\end{equation}
Analogous to \eqref{eq:deltax}, it is also obtained
\begin{equation}
\begin{aligned}
&\tau\sum_{n=1}^m\bigg[h_1h_2\sum_{i=1}^{M_1-1}\sum_{j=1}^{M_2-1}\Big(\delta_t\varepsilon_{ij}^{n-\frac{1}{2}}\Big)
\Big(\delta_y^2\varepsilon_{ij}^{n-\frac{1}{2}}\Big)\bigg]\\
=&-\frac{1}{2}\Big[\big\|\delta_y\varepsilon^m\big\|^2-\big\|\delta_y\varepsilon^0\big\|^2\Big].
\end{aligned}
\end{equation}
On the basis of \eqref{eq:lip}, there holds that
\begin{equation}\label{eq:right3}
\begin{aligned}
&h_1h_2\sum_{i=1}^{M_1-1}\sum_{j=1}^{M_2-1}\bigg[\tau\sum_{n=1}^m\Big(\delta_t\varepsilon_{ij}^{n-\frac{1}{2}}\Big)
\Big|f\Big(x_i,y_j,t_{n-1},u_{ij}^{n-1}\Big)-f\Big(x_i,y_j,t_{n-1},\widetilde{u}_{ij}^{n-1}\Big)\Big|\bigg]\\
\leq & h_1h_2\sum_{i=1}^{M_1-1}\sum_{j=1}^{M_2-1}\bigg[\tau\sum_{n=1}^m\Big(\delta_t\varepsilon_{ij}^{n-\frac{1}{2}}\Big)
L_f\Big|u_{ij}^{n-1}-\widetilde{u}_{ij}^{n-1}\Big|\bigg]\\
\leq & L_fh_1h_2\sum_{i=1}^{M_1-1}\sum_{j=1}^{M_2-1}\bigg[\tau\sum_{n=1}^m\Big(\delta_t\varepsilon_{ij}^{n-\frac{1}{2}}\Big)
\Big|\varepsilon_{ij}^{n-1}\Big|\bigg]\\
\leq & L_fh_1h_2\sum_{i=1}^{M_1-1}\sum_{j=1}^{M_2-1}\tau\sum_{n=1}^m
\bigg[\frac{K_m}{2L_f}\Big(\delta_t\varepsilon_{ij}^{n-\frac{1}{2}}\Big)^2
+\frac{L_f}{2K_m}\Big(\varepsilon_{ij}^{n-1}\Big)^2\bigg]\\
=&\frac{\tau K_m}{2}\sum_{n=1}^m\big\|\delta_t\varepsilon^{n-\frac{1}{2}}\big\|^2
+\frac{\tau L_f^2}{2K_m}\sum_{n=1}^m\big\|\varepsilon^{n-1}\big\|^2.
\end{aligned}
\end{equation}

From Equations \eqref{eq:left1}$-$\eqref{eq:right3}, the inequality below is derived
\begin{equation}\label{eq:inequ}
\begin{aligned}
&\big\|\delta_x\varepsilon^m\big\|^2+\big\|\delta_y\varepsilon^m\big\|^2\leq \big\|\delta_x\varepsilon^0\big\|^2+\big\|\delta_y\varepsilon^0\big\|^2\\
+& \Delta\beta\sum_{l=1}^Kp(\beta_l)\frac{t_m^{2-\beta_l}}{\Gamma(3-\beta_l)}\big\|\psi_2^\ast\big\|^2
+\frac{\tau L_f^2}{K_m}\sum_{n=1}^m\big\|\varepsilon^{n-1}\big\|^2.
\end{aligned}
\end{equation}
According to Lemma \ref{lem:L2} and Lemma \ref{lem:L22}, we deduce from \eqref{eq:inequ} that
\begin{equation*}
\begin{aligned}
\|\varepsilon^n\|^2\leq & \frac{1}{144}\|\Delta_h\varepsilon^0\|^2+\frac{1}{12}\Delta\beta\sum_{l=1}^Kp(\beta_l)
\frac{T^{2-\beta_l}}{\Gamma(3-\beta_l)}\|\psi_2^\ast\|^2\\
+& \frac{\tau L_f^2}{12\Delta\beta\sum_{l=1}^Kp(\beta_l)\frac{T^{1-\beta_l}}{\Gamma(2-\beta_l)}}\sum_{k=1}^n\|\varepsilon^{k-1}\|^2,
\qquad 1\leq n\leq N.
\end{aligned}
\end{equation*}
Finally, taking Lemma \ref{lem:gronwall}, it follows that
\begin{equation*}
\begin{aligned}
\|\varepsilon^n\|^2\leq & \bigg(\frac{1}{144}\|\Delta_h\varepsilon^0\|^2+\frac{1}{12}\Delta\beta\sum_{l=1}^Kp(\beta_l)
\frac{T^{2-\beta_l}}{\Gamma(3-\beta_l)}\|\psi_2^\ast\|^2\bigg)\cdot\\
&\exp{\Bigg(\frac{L_f^2}{12\Delta\beta\sum_{l=1}^Kp(\beta_l)\frac{T^{-\beta_l}}{\Gamma(2-\beta_l)}}}\Bigg).
\end{aligned}
\end{equation*}
This completes the proof.
\end{proof}

In the following we consider the convergence of the difference approximation. Noticing that $U_{ij}^n$ is the exact solution
of the system \eqref{eq:equation}$-$\eqref{eq:initiall} and $u_{ij}^n$ is the numerical solution of the difference
scheme \eqref{eq:scheme}$-$\eqref{eq:schboun}, we denote the error
$$e_{ij}^n=U_{ij}^n-u_{ij}^n,\quad 0\leq i\leq M_1,\quad 0\leq j\leq M_2,\quad 0\leq n\leq N.$$
Subscribing \eqref{eq:scheme}$-$\eqref{eq:schboun} from \eqref{eq:U}$-$\eqref{eq:Ubou}, we get the error equations
\begin{align}\label{eq:error}
&\Delta\beta\sum_{l=1}^Kp(\beta_l)\frac{\tau^{1-\beta_l}}{\Gamma(3-\beta_l)}\Big[a_0^{(\beta_l)}\delta_te_{ij}^{n-\frac{1}{2}}
-\sum_{k=1}^{n-1}\big(a_{n-k-1}^{(\beta_l)}-a_{n-k}^{(\beta_l)}\big)\delta_te_{ij}^{n-\frac{1}{2}}\Big]\nonumber\\
&+\frac{\tau}{4\mu}\delta_x^2\delta_y^2\frac{e_{ij}^n-e_{ij}^{n-1}}{\tau}\nonumber\\
= & \delta_x^2e_{ij}^{n-\frac{1}{2}}+\delta_y^2e_{ij}^{n-\frac{1}{2}}+f\big(x_i,y_j,t_{n-1},U_{ij}^{n-1}\big)
-f\big(x_i,y_j,t_{n-1},u_{ij}^{n-1}\big)\nonumber\\
&+R_{ij}^{n-\frac{1}{2}}+\widehat{R}_{ij}^{n-\frac{1}{2}}+\widetilde{R}_{ij}^{n-\frac{1}{2}},
\ 1\leq i\leq M_1-1,\ 1\leq j\leq M_2-1,\ 1\leq n\leq N,\\
&e_{ij}^0=0,\quad 1\leq i\leq M_1-1,\quad 1\leq j\leq M_2-1,\nonumber\\
&e_{ij}^n=0,\quad (i,j)\in \gamma,\quad 0\leq n\leq N.\nonumber
\end{align}
\begin{theorem}
Suppose that the continuous problem \eqref{eq:equation}$-$\eqref{eq:initiall} has
solution $u(x,y,t)\in C_{x,y,t}^{4,4,3}(\bar{\Omega}\times [0,T])$. Then there is a positive constant $C$ such that
\begin{equation}\nonumber
\big\|e^n\big\|\leq C (\tau+h_1^2+h_2^2+\Delta\beta^2).
\end{equation}
\end{theorem}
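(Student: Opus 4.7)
The plan is to mirror, almost line for line, the stability argument of Theorem~\ref{them:stability} applied to the error equation~\eqref{eq:error}. Compared with the perturbation equation~\eqref{eq:pertt}, the error equation has initial data $e_{ij}^0=0$, no $\psi_{2}^{\ast}$-term, but carries three extra forcing contributions $R_{ij}^{n-\frac12}$, $\widetilde R_{ij}^{n-\frac12}$, $\widehat R_{ij}^{n-\frac12}$ whose sizes have already been established in Section~\ref{sec:scheme}. First I would recast~\eqref{eq:error} in the $b_k^{(\beta_l)}$ form of~\eqref{eq:perterror}, multiply by $h_1h_2\tau\,\delta_t e_{ij}^{n-\frac12}$, and sum over $1\le i\le M_1-1$, $1\le j\le M_2-1$ and $1\le n\le m$. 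The manipulations~\eqref{eq:left1}--\eqref{eq:deltax} carry over verbatim and yield: on the left a contribution $\tfrac{1}{2}\tau K_m\sum_{n=1}^{m}\|\delta_t e^{n-\frac12}\|^{2}$ from Lemma~\ref{lem:126} (with $q=0$) plus the non-negative mixed-derivative term $\tfrac{1}{4\mu}\sum_{n}\|\delta_x\delta_y(e^n-e^{n-1})\|^{2}$; on the right the telescoping Green identities give $-\tfrac{1}{2}|e^m|_1^{2}$ (the $|e^0|_1$ piece vanishes).

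Next, I handle the Lipschitz and truncation contributions by Young's inequality, but with smaller splittings than in the stability proof so that residual $\|\delta_t e\|^{2}$-mass is available for absorption. Specifically, for the nonlinear term I would use
\begin{equation*}
L_f|\delta_t e_{ij}^{n-\frac12}|\,|e_{ij}^{n-1}|\le\frac{K_m}{4}\bigl(\delta_t e_{ij}^{n-\frac12}\bigr)^{2}+\frac{L_f^{2}}{K_m}\bigl(e_{ij}^{n-1}\bigr)^{2},
\end{equation*}
and for each truncation residual $R,\widetilde R,\widehat R$ the splitting
\begin{equation*}
|R_{ij}^{n-\frac12}\cdot\delta_t e_{ij}^{n-\frac12}|\le\frac{K_m}{12}\bigl(\delta_t e_{ij}^{n-\frac12}\bigr)^{2}+\frac{3}{K_m}\bigl(R_{ij}^{n-\frac12}\bigr)^{2},
\end{equation*}
and analogously for $\widetilde R$ and $\widehat R$. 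The four coefficients $\tfrac{K_m}{4}+3\cdot\tfrac{K_m}{12}=\tfrac{K_m}{2}$ of $\tau\sum_n\|\delta_t e^{n-\frac12}\|^{2}$ on the right are exactly cancelled by the $\tfrac{1}{2}\tau K_m\sum_n\|\delta_t e^{n-\frac12}\|^{2}$ produced on the left, mimicking the cancellation already observed in the stability argument. Plugging in the known bounds $|R_{ij}^{n-\frac12}|\le C_{3}(\tau^{1+\Delta\beta/2}+h_1^{2}+h_2^{2}+\Delta\beta^{2})$, $|\widetilde R_{ij}^{n-\frac12}|\le C_{4}\tau$, $|\widehat R_{ij}^{n-\frac12}|\le C_{5}\tau^{3}|\ln\tau|$, together with $h_1h_2(M_1-1)(M_2-1)\le L_1L_2$, $m\tau\le T$, and the trivialities $\tau^{2+\Delta\beta}\le\tau^{2}$ and $\tau^{6}|\ln\tau|^{2}\le C\tau^{2}$ for small $\tau$, one arrives at
\begin{equation*}
|e^m|_1^{2}\le C_{6}\bigl(\tau+h_1^{2}+h_2^{2}+\Delta\beta^{2}\bigr)^{2}+\frac{2\tau L_f^{2}}{K_m}\sum_{n=0}^{m-1}\|e^{n}\|^{2}.
\end{equation*}

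Finally, Lemmas~\ref{lem:L2} and~\ref{lem:L22} give $\|e^m\|^{2}\le\tfrac{1}{12}|e^m|_1^{2}$, and Lemma~\ref{lem:gronwall}, applied with $g_{0}=\tfrac{C_{6}}{12}(\tau+h_1^{2}+h_2^{2}+\Delta\beta^{2})^{2}$, $p_{l}=0$, and $k_{l}=\tfrac{\tau L_f^{2}}{6K_m}$, closes the estimate once one observes that $\sum_{l=0}^{n-1}k_l\le\tfrac{TL_f^{2}}{6K_m}$ is bounded uniformly in the discretisation parameters. Taking square roots delivers the asserted bound $\|e^n\|\le C(\tau+h_1^{2}+h_2^{2}+\Delta\beta^{2})$. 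The main obstacle I anticipate is purely bookkeeping: choosing the Young-inequality constants so that the four $\|\delta_t e\|^{2}$-coefficients on the right sum to at most $\tfrac{K_m}{2}$ while simultaneously keeping the reciprocal factors $1/K_m$ finite; once this is arranged, the estimate is a clean transcription of the stability proof with the truncation residuals playing the role that $\psi_{2}^{\ast}$ played there.
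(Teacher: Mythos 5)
Your proposal is correct and follows essentially the same route as the paper's own proof: multiply the error equation by $h_1h_2\tau\,\delta_t e_{ij}^{n-\frac12}$, sum, invoke Lemma~\ref{lem:126} with $q=0$, telescope the spatial terms, absorb the Lipschitz and truncation contributions via Young's inequality so the $\|\delta_t e^{n-\frac12}\|^2$ mass cancels, and close with Lemmas~\ref{lem:L2} and~\ref{lem:gronwall}. The only cosmetic difference is that you split the three residuals with Young constants $K_m/12$ each, whereas the paper treats $R+\widetilde R+\widehat R$ as a single quantity with constant $K_m/4$; the totals agree.
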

\begin{proof}
The proof of convergence is similar to that of Theorem \ref{them:stability}.
Multiplying \eqref{eq:error} by $h_1h_2\tau\delta_t e_{ij}^{n-\frac{1}{2}}$, summing up for $i$ from
$1$ to $M_1-1$, for $j$ from $1$ to $M_2-1$ and for $n$ from $1$ to $m$, we estimate each term in
the resulted equation.

By using analogous strategies as \eqref{eq:left1}$-$\eqref{eq:right3}, we get \eqref{eq:convleft}$-$\eqref{eq:convright1} correspondingly.
\begin{equation}\label{eq:convleft}
\begin{aligned}
&\Delta\beta\sum_{l=1}^Kp(\beta_l)\frac{\tau^{2-\beta_l}}{\Gamma(3-\beta_l)}h_1h_2\sum_{i=1}^{M_1-1}\sum_{j=1}^{M_2-1}
\bigg\{\sum_{n=1}^m\bigg[a_0^{(\beta_l)}\delta_te_{ij}^{n-\frac{1}{2}}-\\
&\sum_{k=1}^{n-1}\Big(a_{n-k-1}^{(\beta_l)}-a_{n-k}^{(\beta_l)}\Big)\delta_te_{ij}^{k-\frac{1}{2}}\bigg]
\delta_te_{ij}^{n-\frac{1}{2}}\bigg\}\\
\geq &\frac{1}{2}\tau K_m\sum_{n=1}^m\Big\|\delta_te^{n-\frac{1}{2}}\Big\|^2,
\end{aligned}
\end{equation}
\begin{equation}
\begin{aligned}
&h_1h_2\tau\sum_{i=1}^{M_1-1}\sum_{j=1}^{M_2-1}\sum_{n=1}^m\frac{\tau}{4\mu}\delta_x^2\delta_y^2
\frac{e_{ij}^{n}-e_{ij}^{n-1}}{\tau}\delta_te_{ij}^{n-\frac{1}{2}}\\
=&\frac{1}{4\mu}\sum_{n=1}^m\big\|\delta_x\delta_y(e^n-e^{n-1})\big\|^2\geq0,
\end{aligned}
\end{equation}
\begin{equation}
\begin{aligned}
\tau\sum_{n=1}^m\Big[h_1h_2\sum_{i=1}^{M_1-1}\sum_{j=1}^{M_2-1}\Big(\delta_te_{ij}^{n-\frac{1}{2}}\Big)
\Big(\delta_x^2e_{ij}^{n-\frac{1}{2}}\Big)\Big]
=-\frac{1}{2}\big\|\delta_xe^m\big\|^2,
\end{aligned}
\end{equation}
\begin{equation}
\begin{aligned}
\tau\sum_{n=1}^m\Big[h_1h_2\sum_{i=1}^{M_1-1}\sum_{j=1}^{M_2-1}\Big(\delta_te_{ij}^{n-\frac{1}{2}}\Big)
\Big(\delta_y^2e_{ij}^{n-\frac{1}{2}}\Big)\Big]
=-\frac{1}{2}\big\|\delta_ye^m\big\|^2,
\end{aligned}
\end{equation}
and
\begin{equation}\label{eq:convright1}
\begin{aligned}
&h_1h_2\sum_{i=1}^{M_1-1}\sum_{j=1}^{M_2-1}\bigg[\tau\sum_{n=1}^m\Big(\delta_te_{ij}^{n-\frac{1}{2}}\Big)
\Big|f\Big(x_i,y_j,t_{n-1},U_{ij}^{n-1}\Big)-f\Big(x_i,y_j,t_{n-1},u_{ij}^{n-1}\Big)\Big|\bigg]\\
\leq & L_fh_1h_2\sum_{i=1}^{M_1-1}\sum_{j=1}^{M_2-1}\bigg[\tau\sum_{n=1}^m\Big(\delta_te_{ij}^{n-\frac{1}{2}}\Big)
\Big|e_{ij}^{n-1}\Big|\bigg]\\
\leq & L_fh_1h_2\sum_{i=1}^{M_1-1}\sum_{j=1}^{M_2-1}\tau\sum_{n=1}^m
\bigg[\frac{K_m}{4L_f}\Big(\delta_te_{ij}^{n-\frac{1}{2}}\Big)^2
+\frac{L_f}{K_m}\Big(e_{ij}^{n-1}\Big)^2\bigg]\\
=&\frac{\tau K_m}{4}\sum_{n=1}^m\big\|\delta_te^{n-\frac{1}{2}}\big\|^2
+\frac{\tau L_f^2}{K_m}\sum_{n=1}^m\big\|e^{n-1}\big\|^2.
\end{aligned}
\end{equation}
As for the remainder, it is deduced that
\begin{equation}\label{eq:convright}
\begin{aligned}
&h_1h_2\sum_{i=1}^{M_1-1}\sum_{j=1}^{M_2-1}\sum_{n=1}^m\tau\big(\delta_te_{ij}^{n-\frac{1}{2}}\big)
\big(R_{ij}^{n-\frac{1}{2}}+\widetilde{R}_{ij}^{n-\frac{1}{2}}+\widehat{R}_{ij}^{n-\frac{1}{2}}\big)\\
\leq & h_1h_2\sum_{i=1}^{M_1-1}\sum_{j=1}^{M_2-1}\sum_{n=1}^m\tau\Big(\frac{K_m}{4}\big(\delta_te_{ij}^{n-\frac{1}{2}}\big)^2
+\frac{1}{K_m}\big(R_{ij}^{n-\frac{1}{2}}+\widetilde{R}_{ij}^{n-\frac{1}{2}}+\widehat{R}_{ij}^{n-\frac{1}{2}}\big)^2\Big)\\
\leq & \frac{\tau K_m}{4}\sum_{n=1}^m\big\|\delta_te^{n-\frac{1}{2}}\big\|^2+\frac{\tau h_1h_2}{K_m}\sum_{i=1}^{M_1-1}\sum_{j=1}^{M_2-1}\sum_{n=1}^m
\Big[C_3\big(\tau^{1+\frac{\Delta\beta}{2}}+h_1^2+h_2^2+\Delta\beta^2\big)\\
& +C_4\tau^3\big|\ln\tau\big|+C_5\tau\Big]^2\\
\leq & \frac{\tau K_m}{4}\sum_{n=1}^m\big\|\delta_te^{n-\frac{1}{2}}\big\|^2+\frac{\tau h_1h_2}{K_m}\sum_{i=1}^{M_1-1}\sum_{j=1}^{M_2-1}\sum_{n=1}^m
\Big[\big(C_3+C_4+C_5\big)\big(\tau+h_1^2+h_2^2+\Delta\beta^2\big)\Big]^2\\
\leq & \frac{\tau K_m}{4}\sum_{n=1}^m\big\|\delta_te^{n-\frac{1}{2}}\big\|^2
+\frac{TL_1L_2}{K_m}\Big[\big(C_3+C_4+C_5\big)\big(\tau+h_1^2+h_2^2+\Delta\beta^2\big)\Big]^2.
\end{aligned}
\end{equation}
From \eqref{eq:convleft}$-$\eqref{eq:convright} it follows that
\begin{equation*}
\begin{aligned}
\frac{1}{2}\big(\big\|\delta_xe^m\big\|^2+\big\|\delta_ye^m\big\|^2\big)
\leq & \frac{TL_1L_2}{K_m}\Big[(C_3+C_4+C_5\big)\big(\tau+h_1^2+h_2^2+\Delta\beta^2\big)\Big]^2\\
& +\frac{\tau L_f^2}{K_m}\sum_{n=1}^m\big\|e^{n-1}\big\|^2,
\end{aligned}
\end{equation*}
i.e.,
\begin{equation*}
\begin{aligned}
\frac{1}{2}\big|e^m\big|_1^2\leq & \frac{TL_1L_2}{\Delta\beta\sum_{l=1}^Kp(\beta_l)\frac{1}{\Gamma(2-\beta_l)}T^{1-\beta_l}}
\Big[(C_3+C_4+C_5\big)\big(\tau+h_1^2+h_2^2+\Delta\beta^2\big)\Big]^2\\
& +\frac{\tau L_f^2}{K_m}\sum_{n=1}^m\big\|e^{n-1}\big\|^2.
\end{aligned}
\end{equation*}
According to Lemma \ref{lem:L2}, we obtain
\begin{equation*}
\begin{aligned}
\big\|e^n\big\|^2\leq & \frac{TL_1L_2}{6\Delta\beta\sum_{l=1}^Kp(\beta_l)\frac{1}{\Gamma(2-\beta_l)}T^{1-\beta_l}}
\Big[(C_3+C_4+C_5\big)\big(\tau+h_1^2+h_2^2+\Delta\beta^2\big)\Big]^2\\
& +\frac{\tau L_f^2}{6K_n}\sum_{k=1}^n\big\|e^{k-1}\big\|^2,\quad 0\leq n\leq N.
\end{aligned}
\end{equation*}
Therefore,
\begin{equation*}
\begin{aligned}
\big\|e^n\big\|^2\leq & \frac{TL_1L_2}{6\Delta\beta\sum_{l=1}^Kp(\beta_l)\frac{1}{\Gamma(2-\beta_l)}T^{1-\beta_l}}
\Big[(C_3+C_4+C_5\big)\big(\tau+h_1^2+h_2^2+\Delta\beta^2\big)\Big]^2\cdot\\
& \exp{\bigg(\frac{L_f^2}{6\Delta\beta\sum_{l=1}^Kp(\beta_l)\frac{1}{\Gamma(2-\beta_l)}T^{-\beta_l}}}\bigg),
\end{aligned}
\end{equation*}
where Lemma \ref{lem:gronwall} is applied.
This completes the proof.
\end{proof}

\section{Numerical results}\label{sec:examples}
In this section, a numerical example is tested
to demonstrate the
effectiveness of the proposed scheme, and verify the theoretical results including convergence orders and
numerical stability.
The discrete $L^2$ and $L^\infty$ norms are both taken to measure the numerical errors.
Denote
$$\big\|e^N\big\|_{L^2}:=\Bigg(\sum_{j=1}^{M_2-1}\sum_{i=1}^{M_1-1}\big|U_{ij}^N-u_{ij}^N\big|^2h_1h_2\Bigg)^{\frac{1}{2}},$$
and
$$\big\|e^N\big\|_{L^\infty}:=\max_{1\leq j\leq M_2-1, 1\leq i\leq M_1-1}\big|U_{ij}^N-u_{ij}^N\big|.$$

\begin{example}\label{exam:a}
\begin{equation}
\begin{aligned}
&\int_1^2\Gamma(4-\beta){}_0^CD_t^{\beta}u(x,y,t)d\beta= \frac{\partial^2u(x,y,t)}{\partial x^2}
+\frac{\partial^2u(x,y,t)}{\partial y^2}\\
&+\sin x\sin y\bigg[2(t^3+2t+4)+\frac{6t^2-6t}{\ln t}\bigg]
-(t^3+2t+4)^2\sin^2 x\sin^2 y+u^2, \\
&0<t<1/2,\quad (x,y)\in \Omega=(0,\pi)\times (0,\pi),\\
&u(x,y,t)=0,\quad (x,y)\in \partial\Omega,\quad 0<t<1/2,\\
&u(x,y,0)=4\sin x\sin y,\quad u_t(x,y,0)=2\sin x\sin y, \quad (x,y)\in\Omega,
\end{aligned}
\end{equation}
whose analytical solution is known and is given by
$$u(x,y,t)=(t^3+2t+4)\sin x\sin y.$$
\end{example}

In Figure \ref{fig:error} we illustrate the relative error, which verifies the convergence of the algorithm we proposed.

In Figure \ref{fig:solution} we present a comparison of the exact and numerical solutions. It can be seen
that the numerical solution is in good agreement with the exact solution.

\begin{figure}[!htb]
\centering
\begin{minipage}[c]{0.6\textwidth}
  \centering
  \includegraphics[width=1.2\linewidth]{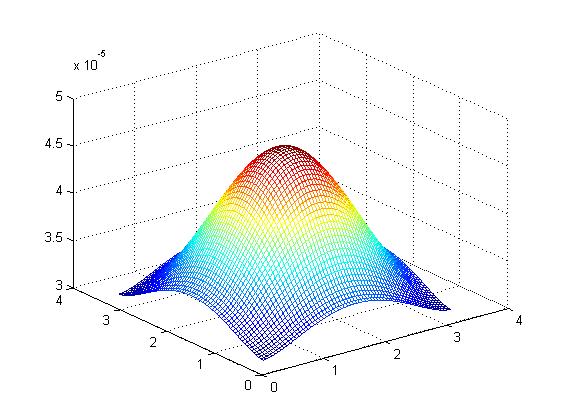} \\  
\end{minipage}
\caption{Relative error at $T=0.5$, obtained by algorithm \eqref{eq:x}$-$\eqref{eq:y} with mesh $h_1=h_2=\frac{\pi}{64}$, $\Delta\beta=\frac{1}{64}$, and $\tau=\frac{1}{4096}$.  }\label{fig:error}
\end{figure}

\begin{figure}[!htb]
\centering
\begin{minipage}[c]{0.45\textwidth}
  \centering
  \includegraphics[width=1\linewidth,totalheight=2in]{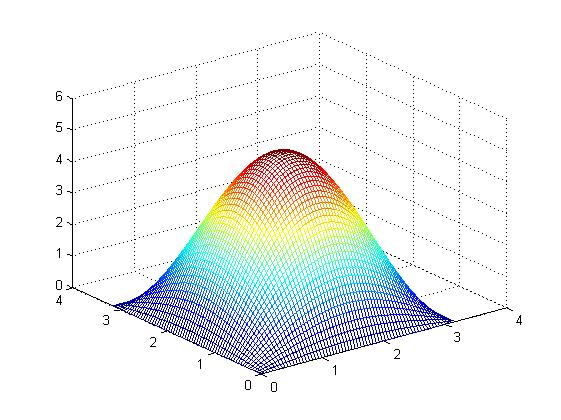} \\
  (a)
\end{minipage}
\begin{minipage}[c]{0.45\textwidth}
  \centering
  \includegraphics[width=1\linewidth,totalheight=2in]{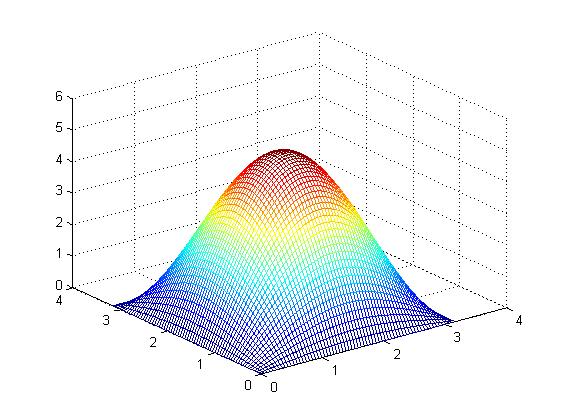} \\
  (b)
\end{minipage}
\caption{Exact solution (a) and approximate solution (b) obtained by algorithm \eqref{eq:x}$-$\eqref{eq:y} at $T=0.5$ with mesh $h_1=h_2=\frac{\pi}{64}$,
$\Delta\beta=\frac{1}{64}$, and $\tau=\frac{1}{4096}$.}\label{fig:solution}
\end{figure}

In Table \ref{table:a}, the numerical accuracy of difference scheme \eqref{eq:x}$-$\eqref{eq:y} in time is
recorded.
Let the step sizes $h_1$, $h_2$, and $\Delta\beta$ be fixed and small enough such that
the dominated error arise from the approximation of the time derivatives.
Varying the step sizes in time, the numerical errors in discrete both $L^\infty$ and $L^2$ norms
and the associated convergence orders are shown in this table respectively, which can be found in agreement with the theoretical analysis.

In Table \ref{table:b}, we take the fixed and small enough step sizes in space, and adopt an optimal step
size ratio in time and distributed order. As $\Delta\beta$ and $\tau$ vary, we compute the errors
and convergence orders listed in the table, which indicates that the convergence order in time and distributed order
are about one and two, respectively.

Table \ref{table:c} displays the computational results with an optimal step size ratio in time, space and
distributed order. We can conclude from this table that the convergence orders with respect to time, space and distributed order
are approximately one, two and two, respectively, which is in good agreement with our theoretical results analyzed in Section \ref{sec:analysis}.

\begin{table}[h!]
\caption{Errors and convergence orders for Example~\ref{exam:a} in temporal direction
 with $h_1=h_2=\frac{\pi}{500}$ and $\Delta\beta=\frac{1}{160}$.}\label{table:a}
  \centering
  \begin{tabular}{ccccc}
    \toprule
    $\tau$ & $\big\|e^N\big\|_{L^\infty} $  & Order   & $\big\|e^N\big\|_{L^2}$ & Order    \\
    \midrule
      1/10   & 0.0839      &      - & 0.1225        &      - \\
      1/20   & 0.0439      & 0.9344 & 0.0634        & 0.9502 \\
      1/40   & 0.0227      & 0.9515 & 0.0326        & 0.9596 \\
      1/80   & 0.0117      & 0.9526 & 0.0167        & 0.9650 \\
      1/160  & 0.0059      & 0.9877 & 0.0085        & 0.9743 \\
    \bottomrule
  \end{tabular}
\end{table}

\begin{table}[h!]
\caption{Errors and convergence orders for Example~\ref{exam:a} with an optimal step size ratio for
$\tau$ and $\Delta\beta$, and $h_1=h_2=\frac{\pi}{500}$.}\label{table:b}
  \centering
  \begin{tabular}{cccccc}
    \toprule
    $\tau$ & $\Delta\beta $  &  $\big\|e^N\big\|_{L^\infty}$  &  Order  &  $\big\|e^N\big\|_{L^2}$  &  Order \\
    \midrule
      1/100   & 1/10   &  0.0093        &      - & 0.0133        &      -    \\
      1/400   & 1/20   &  0.0024        & 1.9542 & 0.0034        & 1.9678    \\
      1/1600  & 1/40   &  6.0481e-04    & 1.9885 & 8.6411e-04    & 1.9762    \\
      1/6400  & 1/80   &1.4751e-04      & 2.0357 & 2.1076e-04    & 2.0365    \\
    \bottomrule
  \end{tabular}
\end{table}

\begin{table}[h!]
\caption{Errors and convergence orders for Example~\ref{exam:a} with an optimal step size ratio
for $\tau$, $h_1$, $h_2$, and $\Delta\beta$.}\label{table:c}
  \centering
  \begin{tabular}{ccccccc}
    \toprule
    $\tau$ & $h_1=h_2$  & $\Delta\beta$   & $\big\|e^N\big\|_{L^\infty}$ & Order   & $\big\|e^N\big\|_{L^2}$ & Order \\
    \midrule
      1/64       & $\pi/2$    & 1/8      & 0.4602        &      -   & 0.7230        &      -\\
      1/256      & $\pi/4$    & 1/16     & 0.1195        & 1.9453   & 0.1689        & 2.0978\\
      1/1024     & $\pi/8$    & 1/32     & 0.0301        & 1.9892   & 0.0426        & 1.9872\\
      1/4096     & $\pi/16$   & 1/64     & 0.0075        & 2.0048   & 0.0107        & 1.9932\\
      1/16384    & $\pi/32$   & 1/128    & 0.0019        & 1.9809   & 0.0027        & 1.9866\\
      1/65536    & $\pi/64$   & 1/256    & 4.7098e-04    & 2.0123   & 6.6801e-04    & 2.0150\\
    \bottomrule
  \end{tabular}
\end{table}


\section{Conclusion}\label{sec:conclusion}
In this paper, we construct efficient numerical scheme for solving two-dimensional time-fractional
wave equation of distributed-order with a
nonlinear source term, and provide the theoretical analysis on stability and convergence by the discrete energy method.
Numerical results are provided by figures and tables, which show the algorithm proposed in this work is
effective and feasible.
In the future work,
the promotion of computational efficiency
will be considered so that the more complicated problems can be handled.

\section*{Acknowledgements}
This research was supported by National Natural Science Foundations of China (No.11471262).
The authors would like to express their gratitude to the referees for their very helpful
comments and suggestions on the manuscript.

\bibliography{Reference}
\end{document}